\DeclareMathOperator{\End}{End}
\def\cyr{%
\renewcommand\rmdefault{wncyr}%
\renewcommand\sfdefault{wncyss}%
\renewcommand\encodingdefault{OT2}%
\normalfont
\selectfont}
\DeclareMathAlphabet{\zap}{OT1}{pzc}{m}{it}
\DeclareTextFontCommand{\textcyr}{\cyr}
\def\be{\begin{equation}}
\def\ee{\end{equation}}
\def\bea{\begin{eqnarray*}}
\def\eea{\end{eqnarray*}}
\newcommand{\rad}{\text{\cyr   ya}}
\newtheorem{main}{Theorem}
\DeclareMathOperator{\Hess}{Hess}
\DeclareMathOperator{\Iso}{Iso}
\DeclareMathOperator{\trace}{Trace}
\DeclareMathOperator{\espan}{span}
\newtheorem{thm}{Theorem}
\newtheorem{lem}{Lemma}
\newtheorem{prop}{Proposition}
\newtheorem{cor}{Corollary}
\newtheorem{defn}{Definition}
\newenvironment{rmk}{\mbox{ }\\{\bf  Remark.}\mbox{ }}{
\hfill $\diamondsuit$\mbox{}\bigskip}
\def\ZZ{{\mathbb Z}}
\def\RR{{\mathbb R}}
\def\CP{{\mathbb C \mathbb P}}
\def\dist{\text{\sf dist}}
\def\rad{\text{\cyr ya}}
\begin{document}

\title{Gravitational Instantons, Weyl Curvature,\\ and Conformally K\"ahler Geometry}

\author{\parbox{1.5in}{\center Olivier Biquard,\\
{\scriptsize   Sorbonne Universit\'e}} \parbox{1.5in}{\center Paul Gauduchon, \\
{\scriptsize   \'Ecole Polytechnique }}  and \parbox{1.75in}{\center Claude LeBrun\thanks{Supported in part by   NSF grant DMS-2203572.}\\
{\scriptsize  Stony Brook University} }}

\date{}
\maketitle

\section{Introduction}

In the late 1970s, Stephen Hawking \cite{hawking} and Gary Gibbons \cite{gibhawk}, along with a small group of
other gravitational physicists at Cambridge, first  began their    systematic exploration of the multiverse  of 
complete, non-compact, Ricci-flat Riemannian $4$-manifolds. They  termed  such spaces  {\sf gravitational instantons}, 
in the expectation that these  would eventually come to represent  tunneling modes in    a   theory  of quantum  gravity.  
Their striking discoveries included the construction of two infinite families of half-conformally-flat
 gravitational  instantons,  respectively generalizing
   the 
Eguchi-Hanson metric \cite{EH} and the Euclidean Taub-NUT metric, 
and specific properties of these unanticipated families turned out to have
long-term implications for the direction of differential-geometric research. Indeed, the fact that the Ricci-flat metrics in these  new families
were all anti-self-dual  then  allowed Hitchin \cite{hitpoly} to pioneer  an essentially independent approach to them, 
based on Penrose's nonlinear graviton construction \cite{pnlg}. Soon afterwards, it was then  realized these metrics were also {\sf hyper-K\"ahler}, in the 
new sense that had   recently been introduced  by Calabi \cite{calhk}, and considering them in this broader context   not only gave rise to new 
methods \cite{HKLR,kronquot} of constructing such spaces, but also led to  a satisfyingly complete classification of 
asymptotically locally Euclidean (ALE)  hyper-K\"ahler gravitational instantons \cite{krontor}. 
This  inspired  repeated  flurries of  intensive mathematical activity \cite{chenchen,cherhit,hein,HSVZ,minerbe,szhk}
during the following decades,  eventually resulting  in  an essentially complete  classification 
of hyper-K\"ahler gravitational instantons 
with curvature  in $L^2$. (However, since there are known  examples \cite{AKL} of complete hyper-K\"ahler $4$-manifolds whose
curvature is {\em not} in $L^2$,  the current  advanced state of the subject may still not represent the end of the story.) 
Such results also allow one to deduce   classification theorems for complete Ricci-flat $4$-manifolds  
that are  K\"ahler \cite{ioana2} or anti-self-dual \cite{wright-thesis}, because either of these 
hypotheses suffices to imply  that the 
universal cover of the  manifold is hyper-K\"ahler. 

However, while the mathematical papers  cited above 
generally  streamline their terminology by building  the  assumption of being hyper-K\"ahler  into
the very definition of a  ``gravitational instanton,''  Gibbons, Hawking,  and their physicist colleagues certainly never  intended  for
the term to be narrowed in this way.  Indeed, one of Hawking's first examples \cite{hawking} 
of a gravitational instanton was the Riemannian Schwarzschild metric, 
a complete Ricci-flat manifold diffeomorphic to $S^2\times \RR^2$ that is gotten from the simplest Lorentzian  black-hole solution 
by  formally replacing time $t$ with $it$.  This example is not even locally hyper-K\"ahler, but, in view with its close relationship
with gravitational physics, it most certainly deserves to be called  a ``gravitational instanton.''  The same must also be said
of the more general Riemannian Kerr metrics, which are again complete Ricci-flat metrics on $S^2\times \RR^2$, and which are
formally obtained from 
their Lorentzian spinning-black--hole analogues by multiplying both time and the angular-momentum   parameter 
 by $i$. 

While these last-mentioned  metrics  are not even locally  K\"ahler, they turn out to be {\em conformally} K\"ahler, and so are, in particular,
 {\em Hermitian}.  They are also asymptotically locally flat (ALF), in the  sense of Definition \ref{alfie} below; 
 in particular,  they have cubic volume growth. 
 Finally, they are {\em toric}, in the sense that their isometry groups contain a $2$-torus $\mathbb{T}^2$. 

Complete oriented Ricci-flat Riemannian manifolds with all of these properties were recently classified \cite{bg-toric} by the first two authors of this
paper. They fall into just four smooth connected families, thereby  realizing exactly four different diffeotypes. In addition to the Kerr family
alluded to above, the other possibilities are the Taub-bolt metric, the reverse-oriented Taub-NUT metric, and a family discovered by 
Chen and Teo \cite{chen-teo} in 2011. This Chen-Teo family had been completed unanticipated by the physics community, 
and the  Hermitian nature  of these metrics was only discovered later, by Aksteiner and Andersson \cite{akan}.
In fact, Biquard and Gauduchon  \cite{bg-toric} did  much more than merely classify such metrics; indeed, 
by building on earlier work by Paul Tod \cite{tod-ricci},
they  showed that these metrics can always be constructed   from axisymmetric  harmonic functions on Euclidean $3$-space.

However, 
one disquieting  feature of this otherwise compelling  story is that it  only  concerns  metrics
that are  invariant under an isometric $\mathbb{T}^2$-action. The aim of this paper 
is to replace this  symmetry assumption  with an open, purely Riemannian condition. 
Indeed, the  Ricci-flat metrics occurring in the above classification turn out to all
have the property that their self-dual Weyl tensors $W^+: \Lambda^+\to \Lambda^+$
satisfy $\det (W^+) > 0$ everywhere. Einstein metrics with this property are said
to satisfy {\em Wu's criterion}, in honor of Peng Wu \cite{pengwu}, who first discovered 
that compact Einstein manifolds with this property are necessarily conformally K\"ahler. 
Our main observation here is that the first author's proof \cite{lebdet} of Wu's criterion can be adapted to  the context of ALF gravitational 
instantons, provided one imposes  fall-off conditions on the metric that  are stringent enough to provide good   control the boundary terms. 
Our first main result is the following:

     \setcounter{main}{0}       
   \begin{main}
    Let $(M,h_0)$ be any toric, Hermitian, but non-K\"ahler  ALF gravitational instanton. If $h$ is another 
     Ricci-flat Riemannian metric on $M$ that  is sufficiently $C^3_1$-close to $h_0$, 
then       $(M,h)$ is also  a   Hermitian   ALF gravitational 
instanton, and   carries a non-trivial  Killing field $\xi$. 
Moreover, $h$ is  conformally related  to a complete, strictly extremal K\"ahler metric $g$.
 \end{main}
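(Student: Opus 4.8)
\medskip
\noindent\textbf{Proof proposal.}
The strategy is to transplant the proof of Wu's criterion in \cite{lebdet} from the compact setting to the open manifold $M$, establishing the conclusions in the order: (a) $h$ is an ALF gravitational instanton with $\det(W^+)>0$ everywhere; (b) $h$ is conformally K\"ahler, and in particular Hermitian; (c) the K\"ahler representative is complete and strictly extremal, and its extremal vector field is the promised Killing field $\xi$.

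First I would verify that the defining inequality is preserved. By the classification recalled above, the model $h_0$ satisfies $\det(W^+_{h_0})>0$ at every point, with self-dual Weyl curvature decaying like $r^{-3}$ on the ALF end; indeed, being conformally K\"ahler, $W^+_{h_0}$ has one positive and one double negative eigenvalue, in the ratio $2:(-1):(-1)$. The three eigenvalues of $W^+$ are algebraic functions of the $2$-jet of the metric, and the weight-$1$ norm $C^3_1$ controls $h-h_0$ and its first three derivatives with successively faster decay, so that the curvature perturbation is bounded by $r^{-3}$ --- the same rate as $W^+_{h_0}$ itself. Hence, for $h$ sufficiently $C^3_1$-close to $h_0$, the top eigenvalue of $W^+_h$ remains the unique positive eigenvalue on every compact set by continuity, and on the end by the matching decay rates, so $\det(W^+_h)>0$ globally. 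The same weighted closeness, together with the Ricci-flatness of $h$, forces $h$ to inherit the ALF end structure --- an asymptotic circle fibration and cubic volume growth --- so that $(M,h)$ is again an ALF gravitational instanton.

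Because $\det(W^+_h)>0$ while $\tr W^+_h=0$, the top eigenvalue $\lambda$ of $W^+_h$ is simple and strictly positive everywhere, and its suitably normalized eigenform $\omega$ defines a smooth almost-complex structure $J$ compatible with $h$ and the orientation. Ricci-flatness and the second Bianchi identity give $\delta W^+_h=0$, so $W^+_h$ is a harmonic self-dual Weyl tensor, and I would then run the Derdzinski--Weitzenb\"ock argument underlying \cite{lebdet}: the conformal metric $g=(6\lambda)^{2/3}h$ is the candidate K\"ahler metric, and one must show that its fundamental form is parallel. In the compact case this follows by integrating a Weitzenb\"ock identity for $W^+$ and discarding an exact divergence; the only genuinely new feature here is that integration over $M$ leaves a flux over the spheres at infinity. \emph{Controlling this boundary term is the main obstacle, and it is exactly what dictates the $C^3_1$ fall-off.} The flux integrand is built from $W^+$ and $\nabla W^+$, hence decays like $r^{-3}\cdot r^{-4}=r^{-7}$, strictly faster than the $r^2$ growth of the boundary areas, so the boundary contribution vanishes in the limit; here the weighted closeness to $h_0$ is precisely what guarantees these decay rates for $h$ itself. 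With the flux term eliminated, the Weitzenb\"ock argument shows that $(g,J)$ is K\"ahler, so $J$ is integrable and $(M,h)$ is Hermitian.

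Finally, $h=(6\lambda)^{-2/3}g$ is an Einstein metric conformal to the K\"ahler metric $g$, so Derdzinski's theorem identifies the conformal factor with $s_g^{-2}$ up to scale and shows that $g$ is extremal in Calabi's sense: $\grad_g s_g$ is real-holomorphic and $\xi=J\,\grad_g s_g$ is a Killing field of $g$. Since $h$ is a function of $s_g$ times $g$, the field $\xi$ preserves the conformal factor and is therefore also Killing for $h$, which is the asserted Killing field. For nontriviality and strict extremality I would argue by continuity: the non-K\"ahler model $h_0$ has $s_{g_0}$ nonconstant, so its extremal field is nonzero, and $C^3_1$-closeness keeps $\xi\neq0$. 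Completeness of $g$ would follow from that of $h$ --- automatic for a gravitational instanton --- via the two-sided bounds $c^{-1}g_0\le g\le c\,g_0$ supplied by the uniform $C^3_1$ control of $h$ and of the ratio $\lambda/\lambda_0$ on the end, so that $g$ is bi-Lipschitz to the complete model K\"ahler metric $g_0$ and hence itself complete.
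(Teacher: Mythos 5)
Your proposal follows the paper's strategy in outline---rescale by the top eigenvalue of $W^+$, run a localized version of the Weitzenb\"ock argument of \cite{lebdet}, and reduce everything to the vanishing of a boundary flux---but the step you yourself flag as ``the main obstacle'' is handled incorrectly, and this is a genuine gap. The flux is not ``built from $W^+$ and $\nabla W^+$'': integrating \eqref{implement} produces the boundary term $\int_{\partial U}\omega\wedge\star d\omega$, which involves the first derivative of the eigenform $\omega$, and the only pointwise bound available for this is Lemma \ref{lifeline}, namely $2|\nabla\omega|^2\leq 2\sqrt{6}\,|W^+_g|-s_g$, which involves the scalar curvature of the rescaled metric $g=\alpha_g^2h$. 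That scalar curvature contains $\Delta_h\alpha_g$, i.e.\ two derivatives of the top eigenvalue of $W^+_h$, hence morally two derivatives of curvature---whereas $C^3_1$-closeness controls only one ($\mathcal{R}=O(\varrho^{-3})$ and $\nabla\mathcal{R}=O(\varrho^{-4})$, which yields exactly the $C^1_1$ control \eqref{gentler} of $\alpha_g$ and nothing more). So neither $s_g$ nor $|\nabla\omega|$ admits any pointwise decay estimate, and your claimed $r^{-7}$ decay of the flux integrand is unjustified. (Your rate arithmetic is also carried out in the wrong conformal frame: the boundary integrals in Theorem \ref{punch} are estimated against $d\check{\mu}_g$, with respect to which the boundary volumes shrink like $\varrho^{-1}$ rather than growing like $r^2$.) The paper closes precisely this gap by an integral argument: it first proves $s_g\in L^1(M,g)$---the signed integral via the Yamabe identity $s_g\,d\mu_g=6\alpha_g(\Delta_h\alpha_g)\,d\mu_h$, integrated by parts with boundary terms controlled by \eqref{gentler}, and the positive part via the pointwise Weitzenb\"ock bound $s_+\leq 2\sqrt{6}\,|W^+_g|\leq 4\sqrt{3}\,\alpha_g=O(\varrho^{-1})$---and then uses a coarea argument to \emph{select} a sequence of radii tending to infinity whose level sets satisfy \eqref{nod}; conditions \eqref{wynken} and \eqref{blynken} are then checked directly, and Cauchy--Schwarz kills the flux along this particular exhaustion. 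Without some analogue of this averaged selection of good boundary slices, your argument does not close.

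Two secondary points. First, in your step (a) the phrase ``by the matching decay rates'' silently uses a \emph{lower} bound $|W^+_{h_0}|\geq c\,\varrho^{-3}$: since the perturbation of $\det(W^+)$ is of size $\varepsilon\varrho^{-9}$, positivity survives only because $\det(W^+_{h_0})=\frac{1}{3\sqrt{6}}|W^+_{h_0}|^3$ for the degenerate conformally K\"ahler Weyl tensor (your $2:-1:-1$ ratio) \emph{and} because $|W^+_{h_0}|$ decays no faster than $\varrho^{-3}$; the paper derives the latter from the fact that $\alpha_{g_0}$ is an affine defining function of the moment polygon's edge at infinity, and this input cannot be omitted. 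Second, your continuity argument for strict extremality is both unnecessary and harder to make rigorous (again because $s_g$ is not pointwise controlled by $C^3_1$-closeness): since $h=f^2g$ is scalar-flat while $s_g=6\alpha_g>0$ everywhere, constancy of $s_g$ would make $h$ a constant multiple of $g$, contradicting $s_h=0$; so $s_g$ is automatically nonconstant and $\xi=J\nabla s_g$ is nontrivial, with no perturbation needed. Your completeness argument for $g$ is fine, and matches the mechanism implicit in the paper's lower bound $g>\mathsf{b}^2\left[\gamma+(d\varrho^2+\eta^2)/\varrho^2\right]$.
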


For the proof,  see \S \ref{bullseye} below, where the $C^3_1$ norm  is also defined. 

\medskip 

For two  of the four families of toric Hermitian gravitational instantons, the metric satisfies  both $\det(W^+)> 0$ and $\det (W^-)> 0$,
and  our methods can therefore be applied for both orientations. Doing so then  leads to a stronger result in these cases: 

    \setcounter{main}{1}       
  \begin{main}
 Let $(M,h_0)$ be 
  a Kerr or Taub-bolt  gravitational instanton, 
  and let $h$ be another Ricci-flat metric on $M$ that is sufficiently $C^3_1$-close to $h_0$. 
  Then $(M,h)$ is once again a Kerr or Taub-bolt gravitational instanton. 
    \end{main}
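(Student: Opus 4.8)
The plan is to deduce Theorem B from Theorem A by exploiting the fact that, for the Kerr and Taub-bolt metrics, the inequality $\det(W^-) > 0$ holds in addition to $\det(W^+) > 0$. First I would observe that each of these conditions is open: since the curvature of $h$ depends on two derivatives of the metric and $\det(W^\pm)$ is an algebraic function of that curvature, sufficient $C^3_1$-closeness of $h$ to $h_0$ forces both $\det(W^+)>0$ and $\det(W^-)>0$ to persist uniformly for $h$. Consequently $(M,h_0)$ satisfies the hypotheses of Theorem A for \emph{both} choices of orientation: with the given orientation it is a toric, Hermitian, non-K\"ahler ALF instanton, and the same is true of the manifold equipped with the reversed orientation. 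Applying Theorem A to each orientation therefore produces two conformally K\"ahler structures for $h$: a complex structure $J_+$ compatible with the given orientation, conformal to a strictly extremal K\"ahler metric $g_+$ with non-trivial extremal Killing field $\xi_+$, and a complex structure $J_-$ compatible with the reversed orientation, conformal to a strictly extremal $g_-$ with non-trivial Killing field $\xi_-$.

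The heart of the argument, and the step I expect to be the main obstacle, is to promote these two Killing fields to an isometric $\mathbb{T}^2$-action. The key point is that, since $\det(W^\pm)>0$, a trace-free symmetric operator on $\Lambda^\pm$ with positive determinant has eigenvalues of signs $(+,-,-)$, so its top eigenvalue is simple; the corresponding eigenline canonically determines $J_\pm$ up to sign, and hence determines $\xi_\pm=J_\pm\,\grad_{g_\pm}s_{g_\pm}$ up to sign purely in terms of $h$ and the orientation. In particular $\xi_-$ is invariant under every orientation-preserving isometry of $(M,h)$. The flow of $\xi_+$ consists of such isometries, so $\xi_-$ is invariant along it; that is, $[\xi_+,\xi_-]=0$. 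Independence I would obtain by continuity: the assignment $h\mapsto(\xi_+,\xi_-)$ is continuous in the relevant norm, and for $h_0$ these two canonical fields are precisely the generators of the isometric $\mathbb{T}^2$ of Kerr or Taub-bolt, hence independent; independence being an open condition, it survives for $h$ sufficiently close to $h_0$. Finally, the ALF asymptotics constrain $\Iso(M,h)$ so that the connected abelian subgroup generated by the commuting independent fields $\xi_+,\xi_-$ has compact closure, a torus of dimension at least two; thus $\Iso(M,h)\supseteq\mathbb{T}^2$ and $(M,h)$ is toric.

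With toricity in hand the proof concludes quickly. By Theorem A, $(M,h)$ is a Hermitian ALF Ricci-flat $4$-manifold, and we have just seen it is toric, so the Biquard--Gauduchon classification \cite{bg-toric} forces it into one of the four families --- Kerr, Taub-bolt, reverse-oriented Taub-NUT, or Chen-Teo. Since these four families realize four distinct diffeotypes and $M$ is, by hypothesis, the underlying manifold of $h_0$, which is Kerr or Taub-bolt, the family is pinned down by the diffeotype alone: $(M,h)$ is a Kerr gravitational instanton when $h_0$ is Kerr, and a Taub-bolt gravitational instanton when $h_0$ is Taub-bolt. The delicate points throughout are the canonicity of $\xi_\pm$ (which gives commutativity) and the passage from commuting Killing fields to an honest compact torus; everything else reduces to continuity of the construction and the already-available classification.
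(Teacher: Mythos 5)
Your overall architecture --- apply Theorem A for both orientations of $M$, manufacture an isometric $\mathbb{T}^2$-action from the two resulting Killing fields, then invoke the Biquard--Gauduchon classification and pin down the family by diffeotype --- is exactly the paper's, and your commutativity argument via the canonicity (up to sign) of $\xi_\pm$ is sound. But the step you yourself flag as the heart of the matter, independence of $\xi_+$ and $\xi_-$ by continuity from $h_0$, has a genuine gap: the claimed independence is \emph{false at the base point} in precisely the cases the theorem covers. For Schwarzschild (the $a=0$ member of the Kerr family) and for Taub-bolt, the isometry group is large enough that the scalar curvatures $s_{g_\pm}$ are functions of the radial variable alone; hence $\nabla^{g_\pm}s_{g_\pm}$ points in the radial direction for both orientations, and both $\xi_+=J_+\nabla^{g_+}s_{g_+}$ and $\xi_-=J_-\nabla^{g_-}s_{g_-}$ are proportional to the single canonical circle field ($\partial_t$ for Schwarzschild, the Hopf field for Taub-bolt). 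So the pair $(\xi_+,\xi_-)$ is linearly \emph{dependent} at $h_0$, and no openness argument can produce independence for nearby $h$: independence is indeed an open condition, but you need it to hold at $h_0$ to propagate it, and it does not. (A smaller quibble: your opening assertion that $\det(W^\pm)>0$ persists ``uniformly'' under $C^3_1$-perturbation is not automatic on a noncompact manifold --- it requires the matching of decay rates, $|W^\pm_{h_0}|^3$ comparable to $\det(W^\pm_{h_0})\asymp(1+\dist)^{-9}$, carried out in the proof of Theorem A; since you quote Theorem A as a black box this is harmless, but it should not be presented as mere algebraic openness.)

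The paper closes exactly the degenerate case your argument misses by using the ambitoric machinery of Apostolov--Calderbank--Gauduchon \cite{acgI}. When $\xi_+$ and $\xi_-$ are dependent, one normalizes ($g_-\mapsto c\,g_-$, $J_-\mapsto\pm J_-$) so that $\xi:=\xi_+=\xi_-$, and considers
$\mathscr{S}=\frac{1}{2}\left(\alpha_+^{-2}+\alpha_-^{-2}\right)I+(\alpha_+\alpha_-)^{-1}J_+\circ J_-$.
By \cite[Appendix B.5]{acgI}, $h(\mathscr{S}\cdot,\cdot)$ is a Killing tensor, so $\mathscr{S}(\xi)$ is a Killing field commuting with $\xi$; if $\mathscr{S}(\xi)\not\equiv 0$ it supplies the missing second generator of a torus, while if $\mathscr{S}(\xi)\equiv 0$, \cite[Proposition 12]{acgI} forces $(M,g_+,J_+)$ to be a product of extremal K\"ahler curves with one factor of constant curvature, which the ALF asymptotics make a round $2$-sphere --- so $(M,h)$ is toric in every case. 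Your remaining steps then go through essentially as you say, except that the compactness of the closure of the abelian subgroup is obtained in the paper not from vague ``ALF asymptotics'' but from the proof of Theorem 3 there: the zero set of $\xi$ is compact and nonempty, and equivariance of the exponential map along a point or $\mathbb{CP}_1$ component embeds $\Iso_0(M,h)$ into a finite quotient of $\mathbf{U}(2)$, whence the closure is a torus of rank at most, and here exactly, two.
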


For the proof, see  \S \ref{wagyu} below. That section also highlights   the result of   
Aksteiner, Anderson, Dahl, Nilsson and Simon \cite{AADNS} which  originally  led us to expect for Theorem \ref{bambino} 
to hold. In addition,  we have  added a brief discussion of a recent preprint\footnote{The first version of our own paper was submitted to the {\sf arXiv} during the  weekend  between the submission of   Li's e-print  and its public posting.}  by Minyang Li \cite{mingyang}  that    indicates that Theorem \ref{bambino} could be generalized to also cover   the   two other
 families of gravitational instantons that  appear  in   the Biquard-Gauduchon classification. 

\section{Wu's Criterion, Revisited} 
\label{woohoo} 

In  this section, we will see  that 
 a  remarkable open criterion introduced  by  Peng Wu \cite{pengwu} in the context 
compact Einstein $4$-manifolds also leads to compelling results in other contexts. 
Wu originally  observed  that a compact oriented  simply-connected Einstein $4$-manifold with   Einstein constant $\lambda > 0$ has 
$\det (W^+) > 0$ if the metric is conformally K\"ahler,  
and  then gave a rather opaque argument to show that the converse is also true. 
This prompted the  third author of the present paper to give an entirely  different proof   \cite{lebdet}  of Wu's 
criterion that actually  proves more;  for example, it turns out  that 
a compact oriented 
Riemannian $4$-manifold $(M,h)$ with $b_+(M) \neq 0$ 
satisfies   $\delta W^+=0$ and $\det (W^+) > 0$  if and only if $b_+(M) = 1$ and $h=s^{-2}g$ for some  K\"ahler metric 
$g$ on $M$ with scalar curvature $s > 0$. 
In what follows, we 
will  localize many key  steps  in \cite{lebdet} in order to 
obtain results that are better adapted    to the study  of gravitational instantons.

Suppose that $(M,h)$  is an oriented Riemannian $4$-manifold whose {self-dual Weyl curvature tensor} $W^+$ is 
 {\em harmonic}, in the sense that 
 \begin{equation}
\label{hsdw}
\delta W^+:= -\nabla \cdot W^+=0.
\end{equation}
For example, if $h$ is Einstein, equation \eqref{hsdw} follows as  a consequence of the second Bianchi identity.
However, independent of 
such considerations, the Dirac-type equation  \eqref{hsdw} always implies   \cite[equation (6.8.40)]{pr2}  that
\begin{equation}
\label{roger} 
0 = \nabla^*\nabla W^++ \frac{s}{2} W^+ - 6 W^+\circ W^+ +  2|W^+|^2 I ,
\end{equation}
a   Weitzenb\"ock formula that  has often been eclipsed by   its very useful 
  contraction \cite[equation (16.73)]{bes}
with $W^+$. Next, let $f: M \to \RR^+$ be a smooth positive function on $M$, and consider the corresponding conformal rescaling $g=f^{-2}h$ 
of the original metric $h$. Owing to the weighted conformal invariance   \cite[equation (6.8.8)]{pr2} of the 
Dirac-type equation \eqref{hsdw}, one then finds that 
\begin{equation}
\label{rescaled} 
\delta (fW^+)=0
\end{equation}
with  respect to the conformally rescaled metric $g$. 
The  same calculation \cite[equation (6.8.35)]{pr2}  that proves \eqref{roger} therefore now   gives us 
a   Weitzenb\"ock formula  
\begin{equation}
\label{initio}
0 = \nabla^*\nabla (fW^+)+ \frac{s}{2} fW^+ - 6 fW^+\circ W^+ + 2 f|W^+|^2 I 
\end{equation}
for $fW^+$ with respect to the rescaled metric  $g$. 

 We will next need to clearly  understand  when  an oriented Riemannian  $4$-manifold  has 
  $\det (W^+) > 0$. 
   Since $W^+: \Lambda^+\to \Lambda^+$ is  self-adjoint, we can diagonalize  $W^+$ at any point $p\in M$ as 
$$W^+ = \left[\begin{array}{ccc}\alpha &  &  \\ &  \beta&  \\ &  & \gamma\end{array}\right] $$
by choosing a suitable orthonormal basis  for $\Lambda^+$; and, after re-ordering our basis if necessary, we may arrange that 
$\alpha \geq \beta \geq \gamma$ at $p$. 
However,  by its very definition, 
   the self-dual Weyl curvature $W^+: \Lambda^+\to \Lambda^+$ 
  automatically satisfies  $\trace (W^+) =0$, and it therefore follows that 
$$\alpha  +  \beta + \gamma =0.$$
Hence  $\alpha > 0$ and  $\gamma < 0$ at any point where   $W^+\neq 0$, and we thus see 
 that   $\det W^+ =  \alpha \beta  \gamma$ always  has the same sign as 
$-\beta$, where $\beta$ is once again the middle eigenvalue. 
In other words,  $\det W^+ > 0$ everywhere iff 
 {\sf exactly one} of the eigenvalues, namely $\alpha$, is positive 
at each point, while both the other two  are  negative:
$$W^+ \sim \left[\begin{array}{ccc} + &  &  \\ &  -&  \\ &  & -\end{array}\right] .$$
 This in particular implies that  the positive eigenvalue $\alpha$ 
has multiplicity one everywhere. If we let  $S(\Lambda^+) \subset \Lambda^+$ 
denote the sphere bundle defined by  $|\omega |^2 =2$, then the 
the smooth function 
$\mathsf{Q}  : S(\Lambda^+) \to \RR$ defined by $\mathsf{Q}  (\omega ) = W^+(\omega , \omega )$ 
 has non-degenerate fiberwise Hessian along the set $\mathscr{P}\subset S(\Lambda^+)$ 
of fiberwise maxima. Equivalently, the restriction of $d\mathsf{Q} $ to the 
fibers, considered as a section of the vertical cotangent bundle of $S(\Lambda^+)$, is transverse along $\mathscr{P}$ 
to the zero section of this vector bundle.
The implicit function theorem therefore guarantees that $\mathscr{P}$  is a  submanifold of $S(\Lambda^+)$, and  that it is moreover  transverse to the fibers of $S(\Lambda^+)\to M$.
It therefore follows  that the set $\mathscr{P}$ of $\alpha$-eigenforms $\omega\in S(\Lambda^+)$ can locally be parameterized by a system of 
smooth local sections of $\Lambda^+\to M$.  Since there are exactly two choices of such an $\omega$ at each point of $M$, 
differing only by sign, we conclude that $\mathscr{P}:= \{ \omega \in \Lambda^+~|~ W_h^+(\omega ) = \alpha_h \omega , \quad |\omega |_h^2 =2\}$
is   a smooth 
principal
 $\ZZ_2$-bundle over $M$. More importantly, 
 $\alpha : M\to \RR$  actually defines a smooth positive function $\alpha = W^+(\omega , \omega ) /2$ on $M$,  obtained by 
 taking $\pm \omega$ to be the two  local sections of $\mathscr{P}\subset \Lambda^+$ near an arbitrary point of $M$.

 \begin{rmk} Because $x=\alpha$ is the unique positive solution of the depressed cubic equation 
\begin{equation}
\label{cubic}
 0=  \det (xI-W^+ ) = x^3 - \left[ \frac{1}{2} |W^+|^2\right] x - \det (W^+) , 
\end{equation}
Cardano's formula  provides an explicit expression 
\begin{equation}
\label{tart}
\alpha = 2^{2/3} \Re e \, \sqrt[3]{ \det (W^+) + i \sqrt{ \frac{|W^+|^6}{54}- \left[\det (W^+)\right]^2 }}
\end{equation}
   for the eigenvalue $\alpha$ in terms of $|W^+|^2$ and $\det(W^+)$, 
   where the cube root  means  the principal branch, with positive real part.  However, this explicit formula does not make it  
   obvious  that $\alpha$ remains  smooth  at points where $|W^+|^3 = 3\sqrt{6} \det(W^+)  > 0$, or in other words,  where 
   $\beta = \gamma < 0$. Because the smoothness of  $\alpha: M \to \RR^+$ plays such a central role in what follows, 
   we have therefore chosen to emphasize the above   abstract  explanation of  its  regularity, rather than focusing on the explicit formula 
    \eqref{tart}.
 \end{rmk}
 
 For simplicity, we  will henceforth assume that $M$ is simply connected. This then 
  guarantees that  the principal
 $\ZZ_2$-bundle $\mathscr{P}\to M$ is in fact trivial. Consequently, there is  then a smooth globally-defined  self-dual $2$-form $\omega$ 
 on $M$ with $W^+(\omega ) = \alpha \omega$ and $|\omega|^2 = 2$ at every point; moreover, the only other such 
global  $2$-form is $-\omega$, so this $\omega$ is actually 
 unique up to overall sign. Equivalently, 
 our assumption  that $M$ is simply connected implies that 
  the $\alpha$-eigenspace $L\subset \Lambda^+$ of $W^+$ 
is   a trivial real line-bundle  $L\to M$.

Now  the  condition $\det (W^+) > 0$ is  conformally invariant,  so  the above discussion also  applies to the 
conformal rescaling  $g= f^{-2} h$ of  $h$ defined by any  smooth positive function  $f: M\to \RR^+$. 
  However,  the endomorphism $W^+: \Lambda^+ \to \Lambda^+$ is defined by raising an index
  $$\varphi_{ab}\longmapsto [W^+(\varphi)]_{cd} := \frac{1}{2} {W^{+ab}}_{cd}~\varphi_{ab},$$
  and so carries a conformal weight, even though the  na\"{\i}ve Weyl tensor ${W^{+a}}_{bcd}$ is literally conformally invariant.
  Consequently, replacing $h$ with $g=f^{-2}h$ rescales the top eigenvalue 
  by a factor of $f^2$:
  $$\alpha_g = f^{2}\alpha_h.$$
  We will henceforth impose the interesting  choice  
  \begin{equation}
\label{shrewd}
\boxed{f= \alpha_h^{-1/3}}
\end{equation}
  for our  conformal factor $f$, which then   has the 
 consequence  that 
  $$ \alpha_g = f^2\alpha_h= \alpha_h^{1/3}= f^{-1}.$$
In particular,  $\alpha := \alpha_g$  therefore  satisfies
  \begin{equation}
\label{cunning}
\alpha f \equiv 1
\end{equation}
for this carefully chosen  conformally-rescaled metric $g=f^{-2}h$. 
   
  Since our assumption that $M$ is simply-connected again  implies that  the $\alpha$-eigenbundle  $L\subset \Lambda^+$ of $W^+$ is a  trivial real line-bundle, 
 there once again  exists a global  self-dual $2$-form $\omega$ on $M$ that satisfies
\begin{equation}
\label{charlie}
W^+_{g}(\omega) = \alpha_g ~\omega , \qquad |\omega|_g^2 = 2
\end{equation}
at every point of $M$; moreover, this $\omega$ is unique up to overall sign. 
 Here, normalizing  $\omega\in \Lambda^+$ so that  $|\omega|_g^2=2$ is equivalent to requiring   that 
$$\omega= g(J\cdot , \cdot )$$
for a unique  almost-complex structure $J$ that is compatible with both  $g$ and the given orientation. 
If we can show, under suitable circumstances, that 
$\nabla \omega =0$ with respect to the Levi-Civita connection $\nabla$ of $g$, it will then follow that 
 $J$ is integrable,  and that $(M,g,J)$ is K\"ahler, with  K\"ahler form $\omega$. 
 When  $\delta W^+=0$, our strategy for proving this will be based on a careful  study of the inner product 
\begin{equation}
\label{proximo} 
0= \Big\langle \nabla^*\nabla (fW^+) + \frac{s}{2} fW^+ - 6 fW^+\circ W^+ + 2 f|W^+|^2 I ,~ \omega \otimes
\omega \Big\rangle
\end{equation}
of \eqref{initio} with $\omega \otimes \omega$. 
To make headway on this, we will need  a few key facts about self-dual $2$-forms and the Weyl curvature, starting with  the following:
\begin{lem} 
\label{bliss}
Let $(M,h)$ be a simply-connected oriented  Riemannian $4$-manifold for which  $\det (W^+)> 0$ everywhere.   
 Let $g=f^{-2}h$ be a conformal rescaling of $h$,   and  let $\omega$   be a self-dual $2$-form   that  satisfies \eqref{charlie} everywhere. 
Then 
\begin{equation}
\label{negatron} 
  W^+ ( \nabla^a \omega , \nabla_a\omega) \leq 0 ,
\end{equation}
everywhere, where every term is understood to be defined with respect to $g$. 
\end{lem}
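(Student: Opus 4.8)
The plan is to reduce \eqref{negatron} to a pointwise, essentially linear-algebraic assertion: I would show that each directional derivative $\nabla_a\omega$ takes values in the subbundle of $\Lambda^+$ on which $W^+_g$ is negative-definite, so that every individual term $W^+(\nabla_a\omega,\nabla_a\omega)$ is nonpositive; contracting over $a$ then yields the inequality. Everything is computed with respect to $g$ throughout.

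First I would record two structural facts about $\nabla_a\omega$. Because we are on an oriented Riemannian $4$-manifold, the Hodge star operator on $2$-forms is parallel, so differentiating the self-duality relation $\ast\,\omega=\omega$ gives $\ast(\nabla_a\omega)=\nabla_a\omega$ for every index $a$; hence each $\nabla_a\omega$ is again self-dual, i.e.\ $\nabla_a\omega\in\Lambda^+$. Second, since \eqref{charlie} forces $|\omega|_g^2\equiv 2$, differentiating yields $0=\nabla_a|\omega|^2=2\langle\nabla_a\omega,\omega\rangle$, so each $\nabla_a\omega$ is pointwise orthogonal to $\omega$ within $\Lambda^+$.

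Next I would invoke the eigenvalue structure guaranteed by $\det(W^+)>0$. As the condition $\det(W^+)>0$ is conformally invariant, the endomorphism $W^+_g:\Lambda^+\to\Lambda^+$ still has exactly one positive eigenvalue $\alpha=\alpha_g$, the remaining two eigenvalues $\beta,\gamma$ being strictly negative. The $\alpha$-eigenspace is precisely the line $\espan(\omega)\subset\Lambda^+$, so its orthogonal complement inside the rank-$3$ bundle $\Lambda^+$ is exactly the sum of the $\beta$- and $\gamma$-eigenspaces, on which $W^+_g$ acts as a negative-definite endomorphism. Combining this with the previous paragraph, each $\nabla_a\omega$ lies in $\omega^\perp\cap\Lambda^+$, where $W^+_g$ is negative-definite, so $W^+(\nabla_a\omega,\nabla_a\omega)\leq 0$ for every $a$ (with equality iff $\nabla_a\omega=0$). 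Contracting over $a$ with $g$ then gives $W^+(\nabla^a\omega,\nabla_a\omega)=\sum_a W^+(\nabla_a\omega,\nabla_a\omega)\leq 0$, which is \eqref{negatron}.

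I do not expect a serious obstacle here: the entire content is that the derivative of a fixed-length self-dual form remains self-dual and perpendicular to the form itself, and therefore falls into the two-dimensional negative eigenspace of $W^+_g$. The only points demanding care are the sign conventions—ensuring that the two non-maximal eigenvalues are genuinely negative, which is exactly the meaning of $\det(W^+)>0$—and the index bookkeeping in passing from the single-slot inequality $W^+(\nabla_a\omega,\nabla_a\omega)\leq 0$ to the contracted statement.
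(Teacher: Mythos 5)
Your proof is correct and follows essentially the same route as the paper's: both arguments observe that $\nabla\omega$ takes values in $\Lambda^1\otimes\omega^\perp\subset\Lambda^1\otimes\Lambda^+$ because $\omega$ is self-dual of constant norm, and then use the fact that $\det(W^+)>0$ forces $W^+$ to be negative-definite on $\omega^\perp\subset\Lambda^+$, the orthogonal complement of the top eigenline. Your version merely spells out explicitly the two facts (parallelism of the Hodge star and differentiation of $|\omega|^2\equiv 2$) that the paper compresses into a single sentence.
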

\begin{proof} The covariant derivative   $\nabla \omega$ of $\omega$ belongs to $\Lambda^1 \otimes \omega^\perp \subset\Lambda^1 \otimes  \Lambda^+$ because
$\omega$ has constant norm with respect to $g$. The result therefore follows from the fact that $W^+(\phi , \phi) \leq 0$ for any 
$\phi \in \omega^\perp \subset \Lambda^+$. 
\end{proof} 

\noindent 
We will also need  the following standard algebraic observation:
\begin{lem} At any point $p$ of  an oriented Riemannian $4$-manifold $(M,g)$, 
\begin{equation}
\label{megatron} 
|W^+|^2 \geq \frac{3}{2} \alpha^2
\end{equation}
where $\alpha = \alpha_g$ is the  the top eigenvalue of $W^+_{g}$ at $p$.
\end{lem}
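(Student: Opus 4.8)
The plan is to reduce \eqref{megatron} to an elementary inequality about the three eigenvalues of $W^+$ at $p$. As in the discussion preceding the lemma, I would first diagonalize the self-adjoint endomorphism $W^+ : \Lambda^+ \to \Lambda^+$ at $p$, writing its eigenvalues as $\alpha \geq \beta \geq \gamma$. The essential structural input is that $W^+$ is trace-free, so that $\alpha + \beta + \gamma = 0$; equivalently, $\beta + \gamma = -\alpha$.

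Next I would pin down the normalization of the norm. In the convention fixed by the cubic \eqref{cubic}, the squared norm is exactly $|W^+|^2 = \alpha^2 + \beta^2 + \gamma^2$. This is consistent with reading off the coefficients of \eqref{cubic}: since $\alpha + \beta + \gamma = 0$, one has $\alpha^2+\beta^2+\gamma^2 = -2(\alpha\beta+\beta\gamma+\gamma\alpha) = |W^+|^2$, the last equality being the identification of the linear coefficient $-\tfrac{1}{2}|W^+|^2$ with $\alpha\beta+\beta\gamma+\gamma\alpha$.

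The inequality then follows from the elementary bound $\beta^2 + \gamma^2 \geq \tfrac{1}{2}(\beta+\gamma)^2$, valid for any two reals and equivalent to $(\beta-\gamma)^2 \geq 0$. Substituting $\beta + \gamma = -\alpha$ gives $\beta^2 + \gamma^2 \geq \tfrac{1}{2}\alpha^2$, whence
$$|W^+|^2 = \alpha^2 + (\beta^2 + \gamma^2) \geq \alpha^2 + \tfrac{1}{2}\alpha^2 = \tfrac{3}{2}\alpha^2,$$
as desired. There is no serious obstacle here: the only point requiring care is the normalization convention for $|W^+|^2$, which must be pinned down (via \eqref{cubic}) so that the constant $3/2$ emerges correctly; with a different convention the constant would simply rescale. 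Equality holds precisely when $\beta = \gamma = -\alpha/2$, that is, exactly when the lower two eigenvalues coincide.
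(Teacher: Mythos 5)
Your proof is correct and is essentially the paper's own argument: the paper eliminates $\gamma=-\alpha-\beta$ and completes the square, writing $|W^+|^2=\frac{3}{2}\alpha^2+2\bigl(\beta+\frac{1}{2}\alpha\bigr)^2$, and your slack term $(\beta-\gamma)^2\geq 0$ is exactly $4\bigl(\beta+\frac{1}{2}\alpha\bigr)^2$ under that substitution. Your extra care in pinning down the normalization $|W^+|^2=\alpha^2+\beta^2+\gamma^2$ via the coefficients of \eqref{cubic} is a sound consistency check, matching the convention the paper uses implicitly.
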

\begin{proof}
Because $\trace \, (W^+)=0$, 
$$
|W^+|^2 = \alpha^2 + \beta^2 + (-\alpha - \beta)^2 = \frac{3}{2}\alpha^2 +2 (\beta + \frac{1}{2} \alpha)^2  \geq \frac{3}{2}\alpha^2
$$
where $\beta$ is  the middle  eigenvalue of $W^+_g$ at $p$.
\end{proof} 
\noindent  Finally, 
 we will also need  the Weitzenb\"ock formula \cite[equation (6.8.38)]{pr2} 
 \begin{equation}
\label{harmless} 
(d+d^*)^2 \omega = \nabla^*\nabla \omega - 2W^+(\omega) + \frac{s}{3}\omega
\end{equation}
for the Hodge Laplacian on   self-dual $2$-forms; cf. \cite[p. 324]{besse4}.

\begin{lem} 
\label{grapple}
If $(M,h)$ is a simply-connected  oriented Riemannian $4$-manifold that  satisfies 
$\det (W^+) > 0$, 
then the 
conformally  rescaled metric  $g=f^{-2}h$  defined by \eqref{shrewd} and  the self-dual $2$-form $\omega$ defined by \eqref{charlie} together satisfy 
\begin{equation}
\label{surprise}
\langle \nabla^*\nabla (fW^+), \omega \otimes \omega \rangle  
\geq  2 |\nabla \omega |^2   , 
\end{equation} at every point of $M$, where both sides are computed relative to $g$.
\end{lem}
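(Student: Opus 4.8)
The plan is to collapse the apparently complicated quantity $\langle \nabla^*\nabla(fW^+),\, \omega\otimes\omega\rangle$ down to a single, manifestly sign-definite Weyl term, by exploiting the normalization \eqref{cunning}. The key observation is that, combining \eqref{charlie} with \eqref{cunning}, the self-adjoint endomorphism $T := fW^+$ of $\Lambda^+$ \emph{fixes} $\omega$: pointwise, $T\omega = f\alpha\,\omega = \omega$, since $f\alpha \equiv 1$. (Simple-connectivity guarantees that $\omega$ is a globally defined smooth section.) Consequently the scalar function $\phi := \langle T\omega,\omega\rangle$ equals $|\omega|_g^2 = 2$ and is therefore \emph{constant} on $M$. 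This is essentially the whole trick; everything else is bookkeeping.

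First I would differentiate $\phi \equiv 2$. Using that $T$ is self-adjoint and that $|\omega|^2$ is constant (so $\langle \omega, \nabla_a\omega\rangle = 0$), the first covariant derivative collapses to $\nabla_a\phi = \langle (\nabla_a T)\omega, \omega\rangle$. Differentiating once more, and recalling that $\nabla_a T$ is again self-adjoint so that the two cross-terms coincide, the Hessian of $\phi$ expands as
\[
\nabla_b\nabla_a\phi = \langle(\nabla_b\nabla_a T)\omega,\omega\rangle + 2\,\langle(\nabla_a T)\omega,\nabla_b\omega\rangle .
\]
Since $\phi$ is constant its Hessian vanishes; tracing with $g^{ab}$ and using the convention $\nabla^*\nabla = -\nabla^a\nabla_a$ for the (nonnegative) rough Laplacian, together with the elementary pairing $\langle A,\omega\otimes\omega\rangle = \langle A\omega,\omega\rangle$ valid for self-adjoint $A$, this rearranges into the clean \emph{identity}
\[
\langle \nabla^*\nabla(fW^+),\, \omega\otimes\omega\rangle = 2\,\langle(\nabla^a T)\omega,\, \nabla_a\omega\rangle .
\]
Note that no inequality has entered yet: this is a pure consequence of $T\omega = \omega$.

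Next I would evaluate the right-hand side by differentiating the eigenvector identity $T\omega = \omega$ directly, which yields $(\nabla_a T)\omega = (I - T)\nabla_a\omega$. Substituting this and writing $T = fW^+$ gives
\[
\langle(\nabla^a T)\omega,\nabla_a\omega\rangle = |\nabla\omega|^2 - f\,W^+(\nabla^a\omega,\nabla_a\omega).
\]
Only here does an inequality appear: by Lemma \ref{bliss} we have $W^+(\nabla^a\omega,\nabla_a\omega)\le 0$, and since $f>0$ the subtracted term is nonnegative, so $\langle(\nabla^a T)\omega,\nabla_a\omega\rangle \ge |\nabla\omega|^2$. Combining with the identity above produces exactly \eqref{surprise}.

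The computation is essentially forced once one recognizes that $fW^+$ carries $\omega$ as a unit-eigenvalue eigenvector. I expect the only delicate point to be the second step: one must keep careful track of the self-adjointness of $\nabla_a T$ so that the cross-terms genuinely recombine into the factor of $2$, and of the sign convention $\nabla^*\nabla = -\nabla^a\nabla_a$, which is precisely what makes the resulting inequality point in the direction claimed.
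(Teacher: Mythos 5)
Your proof is correct and is essentially the paper's own argument in cleaner packaging: the paper expands $\nabla^a\nabla_a\langle fW^+,\omega\otimes\omega\rangle$ via Leibniz and uses $f\alpha\equiv 1$, $|\omega|^2\equiv 2$ to kill terms, which is exactly your observation that $\phi=\langle T\omega,\omega\rangle\equiv 2$ has vanishing Hessian, and both computations land on the identical expression $2|\nabla\omega|^2-2f\,W^+(\nabla^a\omega,\nabla_a\omega)$ before invoking Lemma \ref{bliss}. Your intermediate identity $(\nabla_aT)\omega=(I-T)\nabla_a\omega$ is a tidy way of organizing what the paper does term by term, but the key inputs and the point where the inequality enters are the same.
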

\begin{proof}
Since $f\alpha :=f\alpha_g \equiv 1$, $W^+(\omega ) = \alpha \omega$,  and $|\omega|^2 \equiv 2$, we have 
\begin{eqnarray*}
\langle \nabla^*\nabla (fW^+), \omega \otimes \omega \rangle  &=&  \langle -\nabla^a \nabla_a (fW^+), \omega \otimes \omega \rangle  
\\
&=& -\nabla^a \nabla_a \langle  fW^+, \omega \otimes \omega \rangle   + 2 \nabla_a \langle fW^+, \nabla^a(\omega \otimes \omega )\rangle 
\\&&
 \qquad  \qquad\qquad -  f \langle W^+, \nabla^a\nabla_a (\omega \otimes \omega )\rangle 
 \\
&=& \Delta ( f \alpha |\omega|^2 )    + 4 \nabla_a \langle fW^+, \omega \otimes \nabla^a \omega \rangle 
\\&&
 \qquad  \qquad\qquad  - f \langle W^+, \nabla^a\nabla_a (\omega \otimes \omega )\rangle 
  \\
&=& 4 \nabla_a \langle  ( f\alpha) \omega ,  \nabla^a \omega \rangle 
  - f \langle W^+, \nabla^a\nabla_a (\omega \otimes \omega )\rangle 
 \\
&=&  2 \nabla_a \nabla^a |\omega|^2
  - f \langle W^+, \nabla^a\nabla_a (\omega \otimes \omega )\rangle 
   \\
&=&  
  - f \langle W^+, \nabla^a\nabla_a (\omega \otimes \omega )\rangle 
\\&=&  
    - 2f \langle W^+, \omega \otimes \nabla^a\nabla_a \omega \rangle  -2 f \langle W^+, (\nabla^a\omega) \otimes (\nabla_a \omega )\rangle
    \\&=&  
    - 2(f\alpha) \langle \omega , \nabla^a\nabla_a \omega \rangle  -2 f  W^+ (\nabla^a\omega , \nabla_a \omega )
       \\&=&  
   \Delta |\omega|^2 +   2|\nabla \omega|^2  -2 f  W^+ (\nabla^a\omega , \nabla_a \omega )
     \\&=&  
      2|\nabla \omega|^2  -2 f  W^+ (\nabla^a\omega , \nabla_a \omega )
\end{eqnarray*}
Since  $\det (W^+) > 0$,  Lemma \ref{bliss}  then implies  the promised inequality \eqref{surprise}. 
\end{proof}

\begin{prop} 
\label{handhold}
Let $(M,h)$ be a simply-connected  oriented Riemannian $4$-manifold that satisfies $\delta W^+=0$ 
and  $\det (W^+) > 0$ everywhere. Then the 
conformally-rescaled metric  $g=f^{-2}h$  defined by \eqref{shrewd} and  the self-dual $2$-form $\omega$ defined by \eqref{charlie} together satisfy 
\begin{equation}
\label{gobsmacked}
0\geq  \frac{1}{2}  |\nabla\omega |^2 ~   +
 \frac{3}{2}   
\Big\langle \omega , (d+d^*)^2 \omega \Big\rangle 
\end{equation} at every point of $M$, where all terms are to be computed with respect to $g$.
\end{prop}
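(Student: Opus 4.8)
The plan is to extract the pointwise inequality \eqref{gobsmacked} from exactly two ingredients: the contracted Weitzenb\"ock formula \eqref{proximo} for $fW^+$, and the Hodge-Laplacian Weitzenb\"ock formula \eqref{harmless} applied to $\omega$. The coefficients $\tfrac12$ and $\tfrac32$ appearing in \eqref{gobsmacked} are not arbitrary; they are tuned precisely so that, once both formulas are fed in, every remaining term is manifestly non-positive by virtue of Lemma \ref{bliss} and the eigenvalue estimate \eqref{megatron}. The real content is therefore a single careful bookkeeping calculation rather than any new geometric input.

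First I would evaluate the four terms of \eqref{proximo} pointwise, using the normalizations $W^+(\omega)=\alpha\omega$, $|\omega|^2\equiv 2$, and $f\alpha\equiv 1$ coming from \eqref{charlie} and \eqref{cunning}. The first term is already computed in Lemma \ref{grapple}, and equals $2|\nabla\omega|^2 - 2fW^+(\nabla^a\omega,\nabla_a\omega)$. The remaining three are purely algebraic: since $(W^+\circ W^+)(\omega,\omega)=2\alpha^2$ and $I(\omega,\omega)=2$, one finds $\tfrac{s}{2}\langle fW^+,\omega\otimes\omega\rangle = s$, then $-6\langle fW^+\circ W^+,\omega\otimes\omega\rangle = -12\alpha$, and finally $2\langle f|W^+|^2 I,\omega\otimes\omega\rangle = 4f|W^+|^2$. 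Hence \eqref{proximo} collapses to the scalar identity
\[
0 = 2|\nabla\omega|^2 - 2fW^+(\nabla^a\omega,\nabla_a\omega) + s - 12\alpha + 4f|W^+|^2 .
\]

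Next I would pair \eqref{harmless} with $\omega$. Because $|\omega|^2$ is constant, differentiating twice gives $\langle\omega,\nabla^*\nabla\omega\rangle = |\nabla\omega|^2$, while $\langle\omega,W^+(\omega)\rangle = 2\alpha$ and $\langle\omega,\omega\rangle = 2$; thus $\langle\omega,(d+d^*)^2\omega\rangle = |\nabla\omega|^2 - 4\alpha + \tfrac{2s}{3}$. Substituting this into the right-hand side of \eqref{gobsmacked} yields $\tfrac12|\nabla\omega|^2 + \tfrac32\langle\omega,(d+d^*)^2\omega\rangle = 2|\nabla\omega|^2 - 6\alpha + s$. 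I would then eliminate the combination $2|\nabla\omega|^2 + s$ using the scalar identity above, rewriting the right-hand side of \eqref{gobsmacked} as
\[
2fW^+(\nabla^a\omega,\nabla_a\omega) + \bigl(6\alpha - 4f|W^+|^2\bigr).
\]

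It then remains only to observe that both summands are non-positive. The first is $\leq 0$ by Lemma \ref{bliss} together with $f>0$. For the second, \eqref{cunning} gives $f=1/\alpha$, so $6\alpha - 4f|W^+|^2 = 6\alpha - \tfrac{4}{\alpha}|W^+|^2 \leq 6\alpha - \tfrac{4}{\alpha}\cdot\tfrac32\alpha^2 = 0$ by the estimate \eqref{megatron}. This establishes \eqref{gobsmacked}. The only point requiring care --- and the reason the weights $\tfrac12,\tfrac32$ matter --- is that the $-12\alpha$ produced by the $W^+\circ W^+$ term must split as $-6\alpha-6\alpha$, with one half absorbed into the Hodge-Laplacian comparison and the other paired against $4f|W^+|^2$ through \eqref{megatron}; with the wrong coefficients the cancellation fails. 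I expect no genuine obstacle beyond keeping the signs and conformal weights straight.
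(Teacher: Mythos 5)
Your proposal is correct and is essentially the paper's own argument: both pair the Weitzenb\"ock formula \eqref{initio} with $\omega\otimes\omega$, use the computation underlying Lemma \ref{grapple} (equivalently Lemma \ref{bliss}), the normalizations $f\alpha\equiv 1$ and $|\omega|^2\equiv 2$, the eigenvalue estimate \eqref{megatron}, and the Hodge Weitzenb\"ock formula \eqref{harmless}. The only difference is bookkeeping order --- you keep an exact scalar identity and apply the two non-positivity estimates, namely $W^+(\nabla^a\omega,\nabla_a\omega)\leq 0$ and $6\alpha-4f|W^+|^2\leq 0$, at the very end, whereas the paper chains the same inequalities from the start.
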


\begin{proof} By  applying 
\eqref{cunning}, \eqref{charlie},  \eqref{negatron},  \eqref{megatron},  \eqref{harmless},  and \eqref{surprise} to   \eqref{proximo}, we have 
\begin{eqnarray*} 0&=& 
 \Big\langle \nabla^*\nabla fW^+ + \frac{s}{2} fW^+ - 6 fW^+\circ W^+ + 2 f|W^+|^2 I , \omega \otimes
\omega \Big\rangle ~ \\
&=& \langle \nabla^*\nabla ( fW^+) , \omega\otimes \omega \rangle  +  \Big[ \frac{s}{2} W^+(\omega , \omega ) 
 - 6 |W^+(\omega)|^2+ 2 |W^+|^2 |\omega |^2 \Big] f~   \\
&\geq & 2|\nabla \omega |^2  
  +  \Big[ \frac{s}{2} \alpha |\omega|^2 
 - 6 \alpha^2 |\omega|^2+ 2 |W^+|^2 |\omega |^2 \Big] f~   \\
 &\geq &   2|\nabla \omega |^2  
+  \Big[ \frac{s}{2} \alpha |\omega|^2 
 - 6 \alpha^2 |\omega|^2+ 3 \alpha^2 |\omega |^2 \Big] f~   \\
  &\geq &   2|\nabla \omega |^2  
+  \Big[ \frac{s}{2}  |\omega|^2 
 - 3 \alpha |\omega|^2 \Big] (\alpha  f)~   \\
  &= &   2|\nabla \omega |^2  
+  \Big[ \frac{s}{2}  |\omega|^2 
 - 3 \alpha |\omega|^2 \Big]   \\
  &=&  
\frac{1}{2} |\nabla \omega |^2    + \frac{3}{2} \langle \omega , \nabla^*\nabla \omega  \rangle   -\frac{3}{4}\Delta |\omega|^2
+  \Big[ \frac{s}{2}  |\omega|^2 
 - 3 \alpha |\omega|^2 \Big]   \\
   &=&  
\frac{1}{2} |\nabla \omega |^2 + \frac{3}{2}  \Big[ \langle \omega , \nabla^*\nabla \omega  \rangle -2 W^+(\omega , \omega ) +  \frac{s}{3}  |\omega|^2 
 \Big] \\
  &=& 
 \frac{1}{2}  |\nabla\omega |^2 ~   +
 \frac{3}{2}   
\Big\langle \omega , (d+d^*)^2 \omega \Big\rangle ,
 \end{eqnarray*}
 thus proving the promised pointwise inequality. 
    \end{proof} 

\begin{lem} 
\label{lifeline} 
Under the hypotheses of Proposition \ref{handhold}, the $2$-form $\omega$  satisfies 
$$
(2\sqrt{6}|W^+| - s ) \geq 2 |\nabla \omega |^2 
$$
with respect to the rescaled metic $g$, at every point of  $M$. 
\end{lem}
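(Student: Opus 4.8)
The plan is to recognize that the hard analytic work has already been carried out in Proposition \ref{handhold}, so that the present Lemma is merely a repackaging of inequality \eqref{gobsmacked} in terms of $|W^+|$ rather than the top eigenvalue $\alpha$. First I would expand the term $\langle \omega, (d+d^*)^2\omega\rangle$ appearing in \eqref{gobsmacked} by means of the Weitzenb\"ock formula \eqref{harmless}. Because $|\omega|_g^2\equiv 2$ is constant, the Bochner-type identity gives $\langle \omega, \nabla^*\nabla\omega\rangle = |\nabla\omega|^2$, while \eqref{charlie} gives $W^+(\omega,\omega) = \alpha|\omega|^2 = 2\alpha$. Substituting these into \eqref{gobsmacked} and collecting the $|\nabla\omega|^2$ terms turns it into the clean pointwise bound
\begin{equation*}
6\alpha - s \geq 2|\nabla\omega|^2 .
\end{equation*}
Equivalently, one may simply halt the chain of (in)equalities in the proof of Proposition \ref{handhold} one line early, at $0 \geq 2|\nabla\omega|^2 + [\tfrac{s}{2}|\omega|^2 - 3\alpha|\omega|^2]$, and set $|\omega|^2 = 2$.

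The only remaining step is to trade the eigenvalue $\alpha$ for the full norm $|W^+|$ without reversing the inequality. Since $\det(W^+) > 0$ throughout $M$, the top eigenvalue $\alpha$ is strictly positive everywhere, so the algebraic estimate \eqref{megatron}, namely $|W^+|^2 \geq \tfrac{3}{2}\alpha^2$, may be rearranged to $\alpha \leq \sqrt{2/3}\,|W^+|$. Multiplying by $6$ and simplifying the constant via $6\sqrt{2/3} = 2\sqrt{6}$ yields $6\alpha \leq 2\sqrt{6}\,|W^+|$, whence
\begin{equation*}
2\sqrt{6}\,|W^+| - s \geq 6\alpha - s \geq 2|\nabla\omega|^2 ,
\end{equation*}
which is exactly the asserted inequality.

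I do not expect any genuine obstacle here: once Proposition \ref{handhold} is available, the Lemma follows by two elementary manipulations, and the only points needing care are bookkeeping ones. One must keep $\alpha$ entering with a positive coefficient, so that enlarging $6\alpha$ to $2\sqrt{6}|W^+|$ weakens the estimate in the intended direction, and one must note that the positivity of $\alpha$ (guaranteed by $\det W^+ > 0$) is precisely what licenses taking positive square roots in \eqref{megatron}. It is also worth recording that the bound \eqref{megatron}, and hence the Lemma, is sharp exactly when the two lower eigenvalues coincide, $\beta = \gamma = -\tfrac{1}{2}\alpha$, i.e. on the degenerate locus $|W^+|^3 = 3\sqrt{6}\det(W^+)$ highlighted in the Remark following \eqref{tart}.
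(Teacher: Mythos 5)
Your proof is correct and follows essentially the same route as the paper: the authors likewise halt the chain of inequalities in the proof of Proposition \ref{handhold} at $0\geq 2|\nabla\omega|^2+(s-6\alpha)$ and then apply \eqref{megatron} in the form $\sqrt{2/3}\,|W^+|\geq\alpha$, with the same constant bookkeeping $6\sqrt{2/3}=2\sqrt{6}$. Your remarks on the positivity of $\alpha$ and on sharpness at the locus $\beta=\gamma$ are accurate but not needed for the argument.
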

\begin{proof} Since $|\omega|^2 \equiv 2$, the proof of Proposition \ref{handhold} shows, in particular, that 
$$0\geq 2|\nabla\omega |^2 +\left( \frac{s}{2} - 3\alpha \right)|\omega|^2 = 2|\nabla\omega |^2 +(s- 6\alpha )$$
everywhere. Because $\sqrt{\frac{2}{3}} |W^+| \geq \alpha$ by 
\eqref{megatron}, 
the claim therefore follows. 
\end{proof}

On the other hand, notice that 
\begin{eqnarray*}
\star \Big\langle \omega , (d+d^*)^2 \omega \Big\rangle&=& \omega \wedge  (d+d^*)^2 \omega \\
&=& \omega \wedge [d(-\star d\star )+ (-\star d \star) d] \omega \\
&=& - 2 \omega \wedge [d  \star d  \omega ]\\
&=&\star  2 |d\omega|^2  - 2 ~d [ \omega \wedge \star d  \omega ].
\end{eqnarray*}
This allows us to rewrite  the pointwise inequality \eqref{gobsmacked}
 as 
\begin{equation}
\label{implement} 
3 \, d [ \omega \wedge \star d  \omega ]\geq \star \Big(\frac{1}{2} |\nabla \omega |^2  +3\, |d\omega|^2\Big).
\end{equation}
Integrating \eqref{implement}  on a precompact domain with smooth boundary, and then applying Stokes' Theorem, 
we therefore obtain the following result:

\begin{prop}  Let $(M,h)$ be a simply-connected   oriented Riemannnian $4$-manifold that satisfies $\delta W^+=0$ and $\det (W^+) > 0$.
Consider the  conformally  rescaled metric  $g=f^{-2}h$  defined by \eqref{shrewd}, and let $\omega$ be one of the two 
 self-dual $2$-forms  on $M$ that satisfies  \eqref{charlie}.
Then for any precompact domain  $U \subset M$ with smooth boundary $\partial U= \overline{U} -U$, we have 
 \begin{equation}
\label{hologram}
3\int_{\partial U}  \omega \wedge \star d  \omega\geq \int_{U} \Big[  \frac{1}{2}  |\nabla\omega |^2 ~   + 3\, |d\omega|^2 \Big] d\mu_g .
 \end{equation}
\end{prop}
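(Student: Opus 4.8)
The plan is to integrate the pointwise inequality \eqref{implement} over $U$ and then convert the exact $4$-form on its left-hand side into a boundary integral via Stokes' Theorem; essentially all of the substantive work has already been carried out in establishing \eqref{gobsmacked} and the Hodge-theoretic identity recorded just before \eqref{implement}, so what remains is a direct integration.

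First I would observe that both sides of \eqref{implement} are genuine $4$-forms on $M$. The right-hand side is $\star$ applied to the function $\tfrac{1}{2}|\nabla\omega|^2 + 3|d\omega|^2$, and since the Hodge star of a $0$-form is that function times the volume element, it equals $\big(\tfrac{1}{2}|\nabla\omega|^2 + 3|d\omega|^2\big)\,d\mu_g$. The left-hand side is the exact $4$-form $3\,d\eta$, where $\eta := \omega \wedge \star d\omega$ is the $3$-form obtained by wedging the self-dual $2$-form $\omega$ with the $1$-form $\star d\omega$ (noting that $d\omega$ is a $3$-form and hence $\star d\omega$ is a $1$-form in dimension four). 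Because $U$ is precompact with smooth boundary, $\omega$ and its derivatives are smooth on the compact closure $\overline{U}$, so both forms integrate to finite quantities over $U$ and $\eta$ restricts smoothly to $\partial U$.

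Next I would integrate \eqref{implement} over $U$, equipping $\partial U$ with the induced boundary orientation, and apply Stokes' Theorem to the exact term: $3\int_U d\eta = 3\int_{\partial U}\eta = 3\int_{\partial U}\omega\wedge\star d\omega$. Combining this with the identification of the right-hand integrand yields precisely \eqref{hologram}. I would also remark that although simple connectivity of $M$ determines $\omega$ only up to an overall sign, every quantity appearing in \eqref{hologram}, namely $\omega\wedge\star d\omega$, $|\nabla\omega|^2$, and $|d\omega|^2$, is quadratic in $\omega$ and hence invariant under $\omega\mapsto-\omega$, so the statement is well-posed regardless of the chosen sign.

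There is no genuine obstacle at this stage: the analytic and algebraic content resides entirely upstream, in the Weitzenb\"ock identity \eqref{initio}, the eigenvalue bounds \eqref{negatron} and \eqref{megatron}, and the resulting pointwise estimate \eqref{gobsmacked}. The only points demanding care are bookkeeping ones, namely fixing the Hodge-star conventions so that the preceding identity for $\star\langle\omega,(d+d^*)^2\omega\rangle$ is correct, and getting the boundary orientation in Stokes' Theorem right so that no stray sign is introduced. Neither presents any real difficulty, and the Proposition follows immediately.
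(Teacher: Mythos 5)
Your proposal is correct and takes essentially the same route as the paper, which likewise obtains \eqref{hologram} by integrating the pointwise inequality \eqref{implement} over $U$ and applying Stokes' Theorem to the exact term $3\,d[\omega\wedge\star d\omega]$. The only cosmetic point is that your auxiliary name for the $3$-form $\omega\wedge\star d\omega$ collides with the $1$-form $\eta$ already used in the paper's Definition \ref{alfie}, so it should be renamed if spliced into the text.
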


This now  allows us to  deduce the following:

  \begin{prop} 
\label{harness}
Let $(M,h)$ be an  oriented,  simply-connected Riemannian $4$-manifold that satisfies $\delta W^+=0$ 
and  $\det (W^+) > 0$ everywhere. Suppose, moreover, that 
 $M$ 
  is a nested union $M=\cup_j U_j$ of precompact domains  $U_1\Subset U_2 \Subset \cdots \Subset U_j\Subset \cdots$ 
 with smooth boundary
such that 
$$\lim_{j\to\infty} \int_{\partial U_j}  \omega \wedge \star d  \omega = 0,$$
where $\omega$  is   the self-dual $2$-form  defined by \eqref{charlie} relative to 
 the 
conformally  rescaled metric  $g=f^{-2}h$  defined by \eqref{shrewd}.
Then $(M,g)$ is a K\"ahler manifold.
\end{prop}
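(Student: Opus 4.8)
The plan is to feed the exhausting sequence of domains $U_j$ into the integral inequality \eqref{hologram} and then pass to the limit, exploiting the fact that the right-hand integrand is a sum of squares and hence pointwise non-negative. First I would apply \eqref{hologram} with $U = U_j$, which gives
\[
3\int_{\partial U_j}\omega\wedge\star d\omega \;\geq\; \int_{U_j}\Big[\tfrac{1}{2}|\nabla\omega|^2 + 3|d\omega|^2\Big]\,d\mu_g \;\geq\; 0
\]
for every $j$. Writing $a_j := \int_{U_j}[\tfrac{1}{2}|\nabla\omega|^2 + 3|d\omega|^2]\,d\mu_g$, the non-negativity of the integrand together with the nesting $U_j \Subset U_{j+1}$ shows that $(a_j)$ is a non-decreasing sequence of non-negative real numbers.

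The decisive step is the order in which the limits are taken. I would fix an arbitrary index $j_0$ and chain the inequalities: for every $j \geq j_0$,
\[
0 \;\leq\; a_{j_0} \;\leq\; a_j \;\leq\; 3\int_{\partial U_j}\omega\wedge\star d\omega .
\]
Letting $j\to\infty$ and invoking the hypothesis $\lim_{j} \int_{\partial U_j}\omega\wedge\star d\omega = 0$ pins the far right-hand side to $0$, forcing $a_{j_0} = 0$. Since $j_0$ was arbitrary and $M = \bigcup_j U_j$, the non-negative integrand must vanish identically, so that $\nabla\omega \equiv 0$ (and a fortiori $d\omega \equiv 0$) throughout $M$.

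It then remains only to read off the K\"ahler conclusion. As recorded just after \eqref{charlie}, the normalization $|\omega|_g^2 \equiv 2$ means that $\omega = g(J\cdot,\cdot)$ for a unique almost-complex structure $J$ compatible with $g$ and the chosen orientation. Since $g$ is parallel, the identity $\nabla\omega = 0$ is equivalent to $\nabla J = 0$, so $J$ is parallel for the Levi-Civita connection of $g$; a parallel almost-complex structure automatically has vanishing Nijenhuis tensor and is therefore integrable, whence $(M,g,J)$ is K\"ahler with K\"ahler form $\omega$.

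The only genuinely delicate point is precisely this ordering of limits. One must resist the temptation to integrate \eqref{implement} over all of $M$ in a single stroke, because the boundary flux $\int_{\partial U_j}\omega\wedge\star d\omega$ is not assumed monotone and could a priori oscillate before settling down to $0$. Freezing the lower index $j_0$ while letting the upper index run to infinity circumvents this difficulty, converting the hypothesized vanishing of the boundary terms into the vanishing of the bulk integral. Beyond this, the argument is a purely formal consequence of the pointwise inequality \eqref{gobsmacked} and its integrated form \eqref{hologram}, together with the standard differential-geometric fact that a compatible parallel almost-complex structure yields a K\"ahler metric.
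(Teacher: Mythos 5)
Your proposal is correct and follows essentially the same route as the paper: both apply \eqref{hologram} to each $U_j$, use the non-negativity and monotonicity of the bulk integrals against the vanishing boundary flux to conclude $\nabla\omega\equiv 0$, and then invoke the fact (already recorded after \eqref{charlie}) that a parallel compatible $\omega$ makes $(M,g,J)$ K\"ahler. Your explicit freezing of the lower index $j_0$ is just a slightly more pedantic phrasing of the paper's observation that the non-decreasing sequence $\int_{U_j}|\nabla\omega|^2\,d\mu_g$ tends to zero and hence vanishes identically.
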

\begin{proof} 
Applying \eqref{hologram} to each   $U_j$  yields 
$$3\int_{\partial U_j}  \omega \wedge \star d  \omega\geq \int_{U_j} \Big[  \frac{1}{2}  |\nabla\omega |^2 ~   + 3 |d\omega|^2 \Big] d\mu_g \geq 
\frac{1}{2}  \int_{U_j} |\nabla\omega |^2 d\mu_g  . 
$$
Since the right-hand side is non-negative, our hypothesis therefore implies that 
 $\lim_{j\to \infty}\int_{U_j} |\nabla \omega|^2 d\mu_g =0.$ But  $U_j \subset U_{j+1}$,  so
  the terms $\int_{U_j} |\nabla \omega|^2 d\mu_g$  in this sequence  are also  non-decreasing in $j$. Thus 
 $\int_{U_j} |\nabla \omega|^2 d\mu_g =0$ for all $j$, and hence $\nabla \omega \equiv$ on each $U_j$. But since 
  $M= \cup_j U_j$, this implies  that $\nabla \omega \equiv 0$ on all of $M$. It follows  that  $(M, g)$ is a K\"ahler manifold. 
   \end{proof}

 \section{Gravitational Instantons} \label{bullseye}
  
In order to  be able to invoke  Proposition \ref{harness} in concrete circumstances, we next show that the relevant boundary hypothesis
will automatically hold if certain geometric  conditions are fulfilled.

 \begin{thm}
 \label{punch}
  Let $(M,h)$ be an oriented, simply-connected, Ricci-flat\linebreak  $4$-manifold that   satisfies  $\det (W^+) > 0$ everywhere,
  and suppose that
  $M$ is expressed 
  as a nested union $M=\cup_j U_j$ of precompact domains $$U_1\Subset U_2 \Subset \cdots \Subset U_j\Subset \cdots$$ with smooth boundary.
   Let 
  $g=f^{-2}h$ be the conformally rescaled metric defined by \eqref{shrewd}, and  let  $d\check{\mu}_g$ denote the $3$-dimensional volume
  measure on each of   these boundaries $\partial U_j$ induced by the restriction of $g$. Also suppose that the $g$-induced 
   $3$-dimensional volumes of these boundaries 
    are uniformly bounded, 
 while  the integrals of $|W^+_g|$ and $s_g$ on these boundaries tend to zero with respect to this same $3$-dimensional volume-measure: 
 \begin{eqnarray}
  \int_{\partial U_j} 1~ d\check{\mu}_g  &<& \mathsf{C}, \label{wynken}\\
 \lim_{j\to \infty} \int_{\partial U_j} |W^+_g| ~d\check{\mu}_g &=& 0,\label{blynken}\\
  \lim_{j\to \infty} \int_{\partial U_j} |s_g| ~d\check{\mu}_g &=& 0.\label{nod}
\end{eqnarray}
  Then $(M,g)$ is a  strictly  extremal K\"ahler manifold, while   the given Ricci-flat $4$-manifold 
  $(M,h)$ is Hermitian, and carries a non-trivial Killing field. 
   \end{thm}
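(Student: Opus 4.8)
The plan is to deduce the theorem from Proposition \ref{harness} together with the pointwise estimate of Lemma \ref{lifeline}. The only hypothesis of Proposition \ref{harness} not already in hand is the boundary condition $\lim_{j\to\infty}\int_{\partial U_j}\omega\wedge\star d\omega = 0$, so my first and principal task would be to extract this from the integral bounds \eqref{wynken}--\eqref{nod}. Since the Hodge star is a pointwise isometry and restriction of a form to a hypersurface is norm non-increasing, the $3$-form $\omega\wedge\star d\omega$ obeys $|\omega\wedge\star d\omega|\leq |\omega|\,|d\omega| = \sqrt{2}\,|d\omega|$ along each $\partial U_j$, while $|d\omega|\leq C|\nabla\omega|$ for a universal constant $C$. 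Lemma \ref{lifeline} gives $2|\nabla\omega|^2 \leq 2\sqrt{6}\,|W^+| - s \leq 2\sqrt{6}\,|W^+| + |s|$, so \eqref{blynken} and \eqref{nod} force $\int_{\partial U_j}|\nabla\omega|^2\,d\check{\mu}_g\to 0$. I would then apply Cauchy--Schwarz together with the uniform volume bound \eqref{wynken} to obtain $\int_{\partial U_j}|\nabla\omega|\,d\check{\mu}_g \leq \mathsf{C}^{1/2}\bigl(\int_{\partial U_j}|\nabla\omega|^2\,d\check{\mu}_g\bigr)^{1/2}\to 0$, whence $\int_{\partial U_j}\omega\wedge\star d\omega\to 0$. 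Proposition \ref{harness} then yields that $(M,g)$ is K\"ahler. This interplay between the curvature-integral hypotheses and the Weitzenb\"ock estimate is where the real content lies, and it is the step I expect to be the main obstacle; everything afterward is structural bookkeeping.

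Once $(M,g)$ is known to be K\"ahler, the self-dual $2$-form $\omega$ of \eqref{charlie} is the normalized K\"ahler form, and its compatible almost-complex structure $J$ is integrable and $g$-orthogonal. Since $h = f^2 g$ is conformal to $g$, the same $J$ is $h$-orthogonal, so $(M,h,J)$ is Hermitian. To identify the extremal structure, I would use that on a K\"ahler surface the self-dual Weyl operator has eigenvalues $s_g/6$ and $-s_g/12$ (double), so that the top eigenvalue is $\alpha_g = s_g/6$; here $\alpha_g = f^{-1} > 0$ by \eqref{cunning} forces $s_g > 0$ everywhere. Combined with \eqref{shrewd}, this gives $f = 6/s_g$ and hence $h = f^2 g = 36\,s_g^{-2}g$, so that $s_g^{-2}g$ is, up to scale, the Ricci-flat metric $h$. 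This is precisely Derdzinski's configuration: a K\"ahler surface with nowhere-vanishing scalar curvature whose conformal rescaling $s_g^{-2}g$ is Einstein is necessarily extremal. Thus $g$ is extremal K\"ahler.

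It remains to establish strictness and produce the Killing field. If $s_g$ were constant, then $h$ would be a constant multiple of $g$ and hence itself K\"ahler; but a Ricci-flat K\"ahler surface is anti-self-dual, i.e.\ $W^+_h\equiv 0$, contradicting $\det(W^+)>0$. Hence $s_g$ is non-constant and $g$ is strictly extremal. Extremality means exactly that $K := J\,\grad_g s_g$ is a Killing field for $g$; moreover $K(s_g) = ds_g(J\grad_g s_g) = g(\grad_g s_g,\, J\grad_g s_g) = 0$ by skew-adjointness of $J$, so $K(s_g^{-2}) = 0$ and therefore $\mathcal{L}_K h = \mathcal{L}_K(36\,s_g^{-2}g) = 36\,s_g^{-2}\,\mathcal{L}_K g = 0$. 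Thus $K$ is a non-trivial Killing field for $h$ as well, which completes the argument.
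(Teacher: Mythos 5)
Your proof is correct and follows essentially the same route as the paper's: the boundary term is killed by combining Lemma \ref{lifeline} with a pointwise bound $|\omega\wedge\star d\omega|\le C\,|\nabla\omega|$ and a Cauchy--Schwarz argument using \eqref{wynken}--\eqref{nod}, after which Proposition \ref{harness} yields the K\"ahler property, and strictness plus the Killing field $J\nabla s_g$ follow as you say. The only cosmetic divergence is the extremality step, where the paper passes through Bach-flatness of $h$ and its conformal invariance (citing \cite{derd,leb-bfk}), while you invoke Derdzi\'nski's theorem directly on the conformally-Einstein configuration $h=36\,s_g^{-2}g$ --- the same underlying result, applied in its original form.
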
  
   \begin{proof} Because $\omega$ is a self-dual $2$-form of norm $\equiv \sqrt{2}$, one has 
     $$
    |\omega \wedge \star d\omega|= |\delta \omega| =|\nabla\cdot \omega |
   $$
   with respect to $g$,    and it is therefore relatively easy to  see   that
   \begin{equation}
   \label{loose} 
2\sqrt{2} |\nabla \omega| \geq |\omega \wedge \star d\omega|
\end{equation}
 at  every point of $M$. (Indeed,  after    detailed calculation,  the constant $2\sqrt{2}$  in \eqref{loose} can actually be replaced  by  $1$; 
     but  the gist of what  follows merely  depends on the fact that there 
  is {\sf some} universal constant for which such an   inequality  holds.) Now, 
    Lemma \ref{lifeline} tells us that the inequality 
      $$(2\sqrt{6} |W^+| - s) \geq 2|\nabla\omega|^2$$
      also holds at every point of $M$. Consequently, inequality   \eqref{loose} implies that
      $$2 \left[2\sqrt{6} |W^+| + |s|\right]^{1/2} \geq |\omega \wedge \star d\omega |$$
     at every point of $M$, and  we therefore deduce that 
     $$ 2\int_{\partial U_j} \left[2\sqrt{6} |W^+| + |s|\right]^{1/2} d\check{\mu}_g  \geq \left|  \int_{\partial U_j}  \omega \wedge \star d\omega  \right|
     $$ 
     holds for each $j$. 
    The  Cauchy-Schwarz inequality thus implies   that 
        $$2\left[ \int_{\partial U_j} 1~ d\check{\mu}_g \right]^{1/2} \left[2\sqrt{6}  \int_{\partial U_j} |W^+| ~ d\check{\mu}_g + \int_{\partial U_j}|s| ~d\check{\mu}_g \right]^{1/2} \geq \left|  \int_{\partial U_j}  \omega \wedge \star d\omega  \right|$$
     for every $j$.  
       Thus, our hypotheses \eqref{wynken}, \eqref{blynken}, and \eqref{nod}   now imply that 
     $$\lim_{j\to\infty} \int_{\partial U_j}  \omega \wedge \star d  \omega = 0.$$

     Since 
$(M,h)$ also satisfies $\delta W^+=0$ and  $\det(W^+) > 0$, this means that all the hypotheses of    Proposition \ref{harness} 
are  fulfilled, and it therefore follows that $g$ is  a K\"ahler metric. 
   In particular, the conformally related metric $h=f^2 g$ is 
   Hermitian. However, since  $h$ is also Ricci-flat, and hence Bach-flat,  the 
   conformal invariance of the latter condition guarantees that $g$ is Bach-flat, too. This  forces \cite{derd,leb-bfk} 
   the K\"ahler metric $g$ to be extremal in the sense of Calabi. However, 
   since $\det (W^+) > 0$, the K\"ahler metric $g$ also satisfies $s/6= \alpha = f^{-1} > 0$.
   But $h= f^2g$ is scalar-flat, whereas $g$ has positive scalar curvature. 
   The conformal factor $f=6s^{-1}$, and hence the scalar curvature $s$ of $g$,  must therefore be non-constant.
  This shows that    $g$ must 
    be a {\em strictly}  extremal K\"ahler metric. In particular, $\xi :=J\nabla s$ must be a non-trivial Killing field --- not only for $g$, but also  for its $\xi$-invariant conformal rescaling 
   $h= 36s^{-2}g$.
   \end{proof}
   
Our goal is now to apply   Theorem \ref{punch}  to   asymptotically locally flat (ALF) gravitational instantons, thereby    putting   the 
 first two authors'   main  classification result  \cite[Theorem 8.2]{bg-toric}    in a new and broader context.

   \begin{defn} 
   \label{alfie} 
   Let $(M,h)$ be a complete, Ricci-flat  Riemannian $4$-manifold $(M,h)$. Then 
  $(M,h)$ will be called an  {\sf ALF gravitational instanton} if  
  \begin{itemize}
  \item   there is a compact subset $\mathfrak{C} \subset M$
   such that $M-\mathfrak{C}$ is diffeomorphic to $\RR^+ \times \Sigma$, where 
    $\Sigma^3$ is  oriented, and  finitely covered by $S^2\times S^1$ or $S^3$; 
    \item $\Sigma$ is equipped with a sign-ambiguous pair $\pm (T,\eta)$ of a vector field $T$ and a $1$-form $\eta$ 
    which  satisfy    $T\lrcorner\, \eta =1$ and
    $T\lrcorner \, d\eta=0$ on, at worst, a double cover of $\Sigma$; 
  \item $\Sigma$ is also equipped with a positive-semi-definite symmetric $2$-tensor field $\gamma\in \Gamma (\odot^2 T^*\Sigma )$
  such that  $\mathcal{L}_T\gamma=0$  and $\ker \gamma= \espan T$, and which moreover locally defines a 
   Gauss-curvature $+1$ metric on the space of leaves of  the foliation tangent to $T$;   and 
  \item  after pulling back via  a suitable   diffeomorphism $\RR^+ \times \Sigma \to M-\mathfrak{C}$, the metric $h$  takes the form
$$ h = d\varrho^2 + \varrho^2 \gamma + \eta^2 + \mho, $$
  where 
the standard coordinate $\varrho$  on $\RR^+$ and the tensor fields  $\gamma$ and $\eta^2$ on $\Sigma$ have been pulled back to $\RR^+ \times \Sigma$
via the first- and second-factor projections, and where the error term $\mho$ satisfies the fall-off condition
\begin{equation}
\label{falloff} 
 \mathbb{D}^j \mho = O(\varrho^{-1-j}) 
\end{equation}
for $0\leq j \leq 3$, where $\mathbb{D}$  denotes 
  the Levi-Civita connection
  of the  background metric 
$d\varrho^2 + \varrho^2 \gamma + \eta^2$ on $\RR^+ \times \Sigma$. 
  \end{itemize}
   \end{defn}
   
   \noindent 
 For the gravitational instantons that will  primarily concern  us here, 
 $T$ and $\eta$ are both single-valued on $\Sigma$, without any need to pass to a double cover. 
  However,  Definition \ref{alfie} has  
 been carefully  worded   to  avoid excluding e.g.\    the ALF hyper-K\"ahler gravitational instantons  \cite{chenchen,minerbeD}   of
 type $D_k$. On the other hand, because Definition \ref{alfie}  assumes  from the outset that $(M^4,h)$ is complete and Ricci-flat, 
we have simplified    \cite[Definition 1.1]{bg-toric} 
  by assuming that $M$  only has one end, since     the Cheeger-Gromoll splitting theorem \cite{cg}
 would  otherwise lead to a  contradiction, e.g. by forcing $(M^4,h)$ to be a flat Riemannian product $\RR\times \Sigma$, 
 even though  $\Sigma$  cannot admit flat metrics.

Now, in the  spirit of our fall-off hypothesis \eqref{falloff},   and after choosing  some base-point $p\in M$, 
 we will say   that a $C^k$ function or tensor field $\mho$ on an ALF gravitational instanton 
 $(M,h_0)$ is 
 of weighted class $C^k_1$ if 
 $$\|\mho \|_{C^k_1} := \sup_M \, \sum_{j=0}^k(1+\dist)^{j+1}  |\nabla^j \mho  |_{h_0} $$
 is finite, where $\nabla$ denotes the Levi-Civita connection of $h_0$. We will also  say  that a second  $C^k$ metric $h$ on $M$ is
  within $C^k_1$-distance $\varepsilon$ 
 of $h_0$ if, in terms of this weighted norm,  $\| h - h_0\|_{C^k_1} < \varepsilon$. 
 Having fixed these conventions, we now formulate and prove a concrete implementation  of Theorem \ref{punch}: 
 
 \pagebreak 
   
     \setcounter{main}{0}       
   \begin{main}
   \label{judy}
    Let $(M,h_0)$ be any toric, Hermitian, but non-K\"ahler  ALF gravitational instanton. If $h$ is another 
     Ricci-flat Riemannian metric on $M$ that  is sufficiently $C^3_1$-close to $h_0$, 
then       $(M,h)$ is also  a   Hermitian   ALF gravitational 
instanton, and   carries a non-trivial  Killing field $\xi$. 
Moreover, $h$ is  conformally related  to a complete, strictly extremal K\"ahler metric $g$.
 \end{main}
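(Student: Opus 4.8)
The plan is to verify, one by one, that the perturbed metric $h$ satisfies every hypothesis of Theorem \ref{punch}, and then to read off all four assertions directly from the conclusion of that theorem. Two of the hypotheses are essentially free: $h$ is Ricci-flat by assumption (so that the second Bianchi identity gives $\delta W^+_h=0$ and the Weitzenb\"ock machinery of \S\ref{woohoo} applies verbatim), and $M$ is an oriented, simply connected $4$-manifold, since by the Biquard--Gauduchon classification a toric, Hermitian, non-K\"ahler ALF gravitational instanton is diffeomorphic to one of four explicitly understood simply connected spaces, whose complex orientation we keep fixed. The two substantive points are therefore the pointwise positivity $\det(W^+_h)>0$ \emph{everywhere} and the three boundary estimates \eqref{wynken}--\eqref{nod} for the associated conformally rescaled metric $g=f^{-2}h$ with $f=\alpha_h^{-1/3}$.

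First I would establish $\det(W^+_h)>0$. Because $h_0$ is conformally K\"ahler, its self-dual Weyl endomorphism $W^+_{h_0}$ carries the K\"ahler eigenvalue pattern $(\alpha,-\tfrac\alpha2,-\tfrac\alpha2)$ at \emph{every} point of $M$, so that $\det(W^+_{h_0})=\tfrac{1}{3\sqrt6}|W^+_{h_0}|^3>0$ with a normalized shape that sits at a fixed, positive distance from the degenerate locus $\{\det=0\}$. On any fixed compact set the openness of the condition $\det(W^+)>0$, together with the fact that the $C^3_1$-closeness of $h$ to $h_0$ controls $W^+$ through two metric derivatives, immediately preserves positivity. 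The delicate region is the end, where $|W^+_{h_0}|\to0$. Here I would combine a weighted upper bound $|W^+_h-W^+_{h_0}|=O(\varepsilon\,\varrho^{-3})$, coming from $\|h-h_0\|_{C^3_1}<\varepsilon$ and the ALF curvature rate, with a matching lower bound $|W^+_{h_0}|\ge c\,\varrho^{-3}$, $c>0$, valid for each background family because its leading-order curvature is governed by a nonzero mass or NUT parameter. These give $\det(W^+_h)\ge |W^+_{h_0}|^3\big(\tfrac{1}{3\sqrt6}-C\varepsilon\big)>0$ on the end once $\varepsilon$ is small. The hard part is exactly this uniform positivity at infinity: the $C^3_1$ norm is calibrated so that the Weyl perturbation decays at the \emph{same} rate $\varrho^{-3}$ as the background Weyl tensor, so differentiation buys nothing, and one must play the small factor $\varepsilon$ against the shape-stability of the K\"ahler pattern. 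This in turn rests on the lower bound $|W^+_{h_0}|\gtrsim\varrho^{-3}$, which is not formal and must be extracted from the asymptotics of the explicit models.

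Next I would confirm that $h$ is itself an ALF gravitational instanton and check the boundary hypotheses. Since $h-h_0=O(\varrho^{-1})$ with its first three derivatives decaying as in \eqref{falloff}, $h$ has the asymptotic form of Definition \ref{alfie} with the same model datum $(T,\eta,\gamma)$, and $h$ is complete; thus $h$ is an ALF gravitational instanton. Taking the exhaustion $U_j=\{\varrho<R_j\}$, the rescaled metric $g=\alpha_h^{2/3}\,h\sim\varrho^{-2}h$ is asymptotic to $dt^2+\gamma+e^{-2t}\eta^2$ with $t=\log\varrho$, i.e.\ a cylinder over the Gauss-curvature-$1$ surface with a collapsing circle. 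Consequently $\Vol_g(\partial U_j)=O(R_j^{-1})$, which yields \eqref{wynken} (indeed the boundary volumes tend to $0$), while $|W^+_g|=f^2|W^+_h|=O(\varrho^{-1})$ and $|s_g|$ stays bounded; multiplying these bounded integrands by the shrinking boundary volumes forces \eqref{blynken} and \eqref{nod}. The same estimates for $h_0$ persist for $h$ up to an $O(\varepsilon)$ correction, so nothing is lost under the perturbation.

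Finally, all hypotheses of Theorem \ref{punch} now hold, and I would simply quote its conclusion: $g$ is a strictly extremal K\"ahler metric, and $(M,h)=(M,f^2g)$ is Hermitian with non-trivial Killing field $\xi=J\,\nabla s_g$. Completeness of $g$ follows because the $g$-distance to infinity is $\int^\infty f^{-1}\,d\varrho\sim\int^\infty\varrho^{-1}\,d\varrho=\infty$, so no end is amputated by the conformal rescaling. Collecting these facts establishes every clause of Theorem \ref{judy}: $(M,h)$ is a Hermitian ALF gravitational instanton carrying a non-trivial Killing field, and $h$ is conformally related to the complete, strictly extremal K\"ahler metric $g$.
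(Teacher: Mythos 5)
Your architecture is the paper's: verify the hypotheses of Theorem \ref{punch} for the perturbed metric, then quote its conclusion. Your treatment of $\det(W^+_h)>0$ also matches the paper's in substance --- openness on compact sets, plus the weighted estimate $|W^+_h-W^+_{h_0}|=O(\varepsilon\varrho^{-3})$ played against a lower bound $|W^+_{h_0}|\geq c\,\varrho^{-3}$ on the end; the paper obtains that lower bound from the fact that $\alpha_{g_0}$ is an affine defining function of the moment polygon's edge at infinity \cite{bg-toric}, rather than from mass/NUT asymptotics, but either source works. Your verifications of \eqref{wynken} and \eqref{blynken} likewise agree with the paper's, and your completeness remark is fine. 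One step you assert without argument: conformally K\"ahler does \emph{not} by itself give the pattern $(\alpha,-\tfrac{\alpha}{2},-\tfrac{\alpha}{2})$ with $\alpha>0$; the K\"ahler spectrum is $\bigl(\tfrac{s}{6},-\tfrac{s}{12},-\tfrac{s}{12}\bigr)$, so $\det(W^+_{h_0})$ has the sign of $s_{g_0}^3$. The positivity $s_{g_0}>0$ is not explicit in \cite{bg-toric}, and the paper proves it by a minimum-principle argument applied to the proper conformal factor $u$ satisfying $(6\Delta_{g_0}+s_{g_0})u=0$. This is fixable, but it is a genuine step, not a formality.

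The real gap is your verification of \eqref{nod}: the claim that ``$|s_g|$ stays bounded'' is exactly what the $C^3_1$ hypothesis cannot deliver. You may not use $s_g=6\alpha_g$, since that presupposes $g$ is K\"ahler --- the conclusion being sought. A priori, because $h$ is scalar-flat, the Yamabe equation gives $s_g=6\,\alpha_g^{-3}\,\Delta_h\alpha_g$, and with $\alpha_g^{-3}\sim\varrho^{3}$ a pointwise bound on $s_g$ near infinity would require $\Delta_h\alpha_g=O(\varrho^{-3})$, i.e.\ two derivatives of $\alpha_h$, hence $\nabla^2W^+_h$ --- four derivatives of the metric --- whereas $C^3_1$-closeness controls only three, yielding just $\nabla(W^+_h-W^+_{h_0})=O(\varepsilon\varrho^{-4})$ and thence \eqref{tumbler} and \eqref{gentler}; nor does elliptic bootstrap rescue you, since ALF bounded geometry gives $|\nabla^2\mathcal{R}_h|=O(\varrho^{-3})$ without the needed extra decay. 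The paper circumvents this using only first-derivative control: it proves $s_g\in L^1(M,d\mu_g)$ by writing $|s_g|=2s_+-s_g$, bounding $\int s_g\,d\mu_g$ over large compact pieces via the integrated Yamabe identity
$$\int s_g \,d\mu_g \;=\; 6\int |\nabla\alpha_g|^2_h\, d\mu_h \;-\; 6\int_{\partial} \alpha_g(\nabla_\nu\alpha_g)\, d\check{\mu}_h,$$
in which only $\nabla\alpha_g$ appears, and bounding the positive part pointwise by $s_+\leq 2\sqrt{6}\,|W^+_g|\leq 4\sqrt{3}\,\alpha_g$, a consequence of Lemma \ref{lifeline} and \eqref{squeeze} that is valid \emph{before} one knows $g$ is K\"ahler. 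A coarea argument then extracts a special sequence of radii along which $\int_{\Sigma_{t_j}}|s_g|\,d\check{\mu}_g\to 0$, and the exhaustion $U_j$ must be built from those radii --- not from an arbitrary sequence $R_j$ as in your proposal. As written, your step \eqref{nod} fails; substituting the $L^1$-plus-subsequence device repairs it, and with the $s_{g_0}>0$ point added, the rest of your argument goes through as in the paper.
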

 
   \begin{proof} By the main classification result \cite[Theorem 8.2]{bg-toric}  of Biquard and Gauduchon,
   the given gravitational instanton $h_0$ must be a Kerr, Chen-Teo, Taub-bolt, or (reverse-oriented) Taub-NUT metric. 
   Because these are all ALF gravitational instantons in the sense of Definition \ref{alfie}, their curvature tensors  
  $\mathcal{R}$ satisfy $\mathcal{R} = O (\varrho^{-3})$ and $\nabla \mathcal{R} = O (\varrho^{-4})$.
  Moreover, 
  each of these metrics is conformal to a K\"ahler metric $g_0= u^{-2}h_0$, 
  where the positive function $u$ is  related to the scalar curvature  of $g_0$ by $u^{-1} = k s_{g_0}$
  for some constant $k$. It follows that     $u$  is  {\sf proper}, because $s_{g_0}\to 0$ at infinity \cite[\S 2]{bg-toric}. 
  This then implies that $s_{g_0}>0$ everywhere\footnote{The fact that $g_0=g_{K}$  has positive scalar curvature
  was never made explicit in \cite{bg-toric}, but was, for example,  implicit  in the  conclusion $A>0$  of \cite[Corollary 5.2]{bg-toric}.}.
  Indeed, since $h_0 = u^2 g_0$ is Ricci-flat, and hence scalar-flat, 
  the Yamabe equation  tells us
  that
  $$0=s_{h_0}u^3  =  (6 \Delta_{g_0} + s_{g_0}) u,$$
 where $\Delta = d^* d = -\nabla \cdot \nabla$ is the geometric Laplacian of $g_0$. 
 But since $u > 0$ is proper, it must have some minimum $p\in M$, and at $p$  we  therefore have 
 $$s_{g_0} (p) =  6 u^{-1} (\nabla \cdot \nabla u) |_p = 6 u^{-1} (\trace \Hess u) |_p \geq 0.$$
 Since $s_{g_0}$ is continuous and  nowhere zero, this shows  that  $s_{g_0} > 0$ everywhere, as claimed. 
 If we  now  choose the normalization  $k =1/6$, note that  $u$ then becomes the function
 $f_{g_0}=\alpha_{g_0}^{-1}=\alpha_{h_0}^{-1/3}$ assigned to $h_0$ by \eqref{shrewd}.

It follows that $g_0$ and  $h_0$ both satisfy $\det(W^+) > 0$, and indeed that 
$\det (W^+_{h_0}) = \frac{1}{4} \alpha_{h_0}^3= \frac{1}{3\sqrt{6}} |W^+_{h_0}|^3>0$
everywhere. Meanwhile, the fall-off condition \eqref{falloff} 
  implies that the Riemann tensor of $h_0$ satisfies 
  $$
  \mathcal{R} = O (\varrho^{-3})  \quad \text{and} \quad \nabla \mathcal{R} = O(\varrho^{-4}),
  $$
 so we also, for instance,  have $|W_{h_0}^+| = O(\varrho^{-3})$ in the end region. But in fact,   $|W^+_{h_0}|= \sqrt{\frac{3}{2}}\alpha_{h_0} =  
 \sqrt{\frac{3}{2}}\alpha_{g_0}^3$ actually   has {\em precisely} $\varrho^{-3}$ fall-off, because $\alpha_{g_0}$ is 
  an affine defining function of the   moment polygon's ``edge at infinity,'' 
 and so  is asymptotically  greater \cite[p. 394]{bg-toric} than a constant times $\varrho^{-1}$.

  Now suppose that $h$ is a second Ricci-flat metric on $M$ which is  close to $h_0$ in the $C^3_1$ sense. It  then follows that 
  $h$  also satisfies the fall-off conditions \eqref{falloff} relative 
   to the model metric $d\varrho^2 + \varrho^2 \gamma + \eta^2$. In particular,   $(M,h)$ is  itself an ALF gravitational instanton, and 
  its curvature tensor also satisfies $\mathcal{R} = O (\varrho^{-3})$ and 
   $\nabla \mathcal{R} = O (\varrho^{-4})$. 
 Consequently,  the  self-dual Weyl curvature of  $h$  satisfies  the fall-off conditions
  $$
  |W^+_h|=  O ( \varrho^{-3}), \quad \nabla |W^+_h|=  O ( \varrho^{-4}),
  $$
  and 
  $$
  \det (W^+_h) =O ( \varrho^{-9}), \quad \nabla  \det (W^+_h) =O ( \varrho^{-10}).
  $$
 Since these same fall-off rates apply to $h_0$, and since 
 $3\sqrt{6}\det (W^+) = |W^+_{h_0}|^3$ is bounded above and below by  positive constant multiplies of  $(1+\dist )^{-9}$,  
 we will   automatically have $\det(W^+_h) > 0$ 
 and $2|W^+_{h_0}| > |W^+_{h}|> \frac{1}{2}|W^+_{h_0}|$ if $\| h-h_0\|_{C^3_1} < \varepsilon$ for $\varepsilon$ sufficiently small. 
Consequently,  $\alpha_h > |W^+_h| >  \frac{1}{2}|W^+_{h_0}|$ is then also  larger than a positive constant times $\varrho^{-3}$
when  $\varrho \gg 0$. 
 
 Now, after  writing  \eqref{cubic} as 
   $$\left(\alpha^2 - \frac{1}{2} |W^+|^2\right) \alpha = \det (W^+)$$
and then  putting the first  derivative of this equation  in the form 
$$
\left(3\alpha^2- \frac{1}{2}|W^+|^2\right) \nabla \alpha = \frac{1}{2}\alpha \nabla |W^+|^2 + \nabla \det (W^+) \\
$$
the fact that
 \begin{equation}
\label{squeeze}
\det (W^+) > 0\quad \Longrightarrow \quad     \frac{2}{3} |W^+|^2 \geq \alpha^2 >   \frac{1}{2}   |W^+|^2 
\end{equation}
 now yields   {\em a priori} fall-off rates  for $\alpha_h$: 
 \begin{equation}
\label{tumbler}
 \alpha _h= O ( \varrho^{-3}) \quad \text{and} \quad \nabla \alpha _h= O ( \varrho^{-4}).
 \end{equation}
The function  $\alpha_g:= \alpha_h^{1/3}$   consequently has fall-off 
\begin{equation}
\label{gentler}
 \alpha _g= O ( \varrho^{-1}), \quad |\nabla \alpha _g|_h= O ( \varrho^{-2})  
 \end{equation}
and therefore  belongs to $C^1_1$. Moreover, $\alpha_g= \alpha_h^{1/3}$ is  also 
larger than a positive constant times $\varrho^{-1}$ for all $\varrho \gg 0$. 

Using this, we will now show that the scalar curvature $s_g$ is an $L^1$ function on $(M,g)$, or in other words  that 
$$\int_M |s_g|~d\mu_g < \infty .$$
To see this, let us first observe that 
$$|s_g|= 2s_+ - s,$$
where $s_+ = \max (s_g , 0)$ is the positive part of $s_g$. 
For $\rad \in \RR^+$, now let $M_{\rad}\subset M$   be the compact $4$-manifold-with-boundary  that is the union of 
the subset $\varrho^{-1} ((-\infty, \rad ])$ of the end region and the 
compact complement $\mathfrak{C}$ of end. We then have
$$\int_{M_\rad} |s_g|~d\mu_g\leq  2 \int_{M_\rad} s_{+} \, d\mu_g + \left| \int_{M_\rad} s_g ~d\mu_g \right|.$$
It will thus suffice to show that both $\int_{M_\rad} s_{+} d\mu_g$ and $| \int_{M_\rad} s_g d\mu_g|$ 
remain  uniformly bounded as  $\rad\to \infty$. 

We begin by considering $\int s_gd\mu_g$, which we will analyze  by means of  the Yamabe equation
$$s_g ~d\mu_g = 6\alpha_g (\Delta_h \alpha_g) ~d\mu_h ,$$
incorporating   the facts  that $s_h=0$,   $g= \alpha_g^2 h$, and $d\mu_g = \alpha_g^4 d\mu_h$. 
Thus 
$$\int_{M_\rad} s_g d\mu_g= 
6\int_{M_\rad} |\nabla \alpha_g|^2_h~ d\mu_h
-6 \int_{\partial M_\rad} \alpha_g (\nabla_\nu \alpha_g) ~d\check{\mu}_h  ,
$$
and hence 
$$
\left| \int_{M_\rad} s_g ~d\mu_g \right| \leq 6  \int_{M_\rad} |\nabla \alpha_g|^2_h ~d\mu_h  + 6  \int_{\partial M_\rad}\alpha_g \left| \nabla \alpha_g\right|_h ~d\check{\mu}_h  , 
$$
where $d\check{\mu}_h$ is the volume $3$-form
on $\partial M_\rad$  induced by the restriction of $h$, and 
where $\nu$ 
denotes the outward-pointing unit normal of $\partial M_\rad$ with respect to $h$.
 On the other hand, since $(M,h)$ is an ALF gravitational instanton, in the sense of Definition \ref{alfie},  the metric has the asymptotic  form 
$$h= d\varrho^2 + \varrho^2\gamma + \eta^2 + O (\varrho^{-1}). $$
Thus, in the end region, our fall-off  \eqref{gentler} on  $\alpha_g $ guarantees that 
\begin{eqnarray*}
|\nabla \alpha_g|^2_h ~d\mu_h &\leq &\mathsf{B} \varrho^{-2}| d\rho \wedge \varOmega \wedge \eta | \\
\alpha_g |\nabla \alpha_g|_h ~d\check{\mu}_h&\leq &\mathsf{B}  \varrho^{-1} ~| \varOmega \wedge \eta|
\end{eqnarray*}
for all $\varrho \gg 0$, where $\varOmega$ is the area form of $\gamma$, and $\mathsf{B}$ is a sufficiently large  constant. 
Thus  $|\int_{M_\rad} s_g d\mu_g|$ behaves like $\text{\sf const} + O (\rad^{-1})$ for $\rad \gg 0$, and  is
therefore  uniformly bounded in $\rad$. 

On the other hand, 
since $g= \alpha_g^2 h$,  and since, after possibly increasing  the size of $\mathsf{B}$,
 $\alpha_g <   \frac{1}{2} \mathsf{B} \varrho^{-1}$  in the asymptotic region by \eqref{gentler}, 	
 we also have 
 \begin{equation}
\label{squash}
g <   \mathsf{B}^2 \left[ \gamma + \frac{d\varrho^2  + \eta^2}{\varrho^2} \right]
\end{equation}
for all  $\varrho > \varrho_0$, where the pointwise inequality is to be understood in the sense of quadratic forms. 
In particular, the $4$-dimensional volume measure of $g$ satisfies
$$d\mu_g < \mathsf{B}^4 \varrho^{-2} | d\varrho \wedge \varOmega \wedge \eta|$$
for all $\varrho > \varrho_0$. On the other hand, Lemma \ref{lifeline} guarantees that $$2\sqrt{6} ~|W^+_g|\geq  \max (s_g , 0)= s_+$$
at every point. When combined with \eqref{squeeze}, this then implies  that 
$$4\sqrt{3}~\alpha_g \geq s_+$$
everywhere. Since  $\alpha_g <   \frac{1}{2} \mathsf{B} \varrho^{-1}$  in the asymptotic region,	we therefore have
$$s_+ d\mu_g \leq 2\sqrt{3} ~\mathsf{B}^5 ~\varrho^{-3}  | d\varrho \wedge \varOmega \wedge \eta|$$
for all $\varrho >\varrho_0$. It follows that $\int_{M_\rad} s_+ d\mu_g$ behaves like $\text{\sf const} +O(\rad^{-2})$ for $\rad \gg 0$,
and is therefore  uniformly bounded in $\rad$. Since $|\int_{M_\rad} s_g d\mu_g|$  has also been shown to be uniformly bounded, 
we thus conclude that  $\int_{M_\rad} |s_g | ~d\mu_g$ is uniformly bounded, too. Hence $$\int_{M} |s_g | ~d\mu_g = \sup_\rad \int_{M_\rad} |s_g | ~d\mu_g < \infty , $$
as claimed.

However, since  we also know that  $\alpha_g > 2\mathsf{b} ~\varrho^{-1}$ when $\varrho > \varrho_0$, for some positive constant $\mathsf{b} $, 
we  also have 
$$g  >    \mathsf{b} ^2 \left[ \gamma + \frac{d\varrho^2  + \eta^2}{\varrho^2} 
\right]$$
for all $\varrho > \varrho_0$, 
so the 
 $4$-dimensional volume measure $d\mu_g$ and  
the $3$-dimensional volume measure $d\check{\mu}_g$ of  the  $\varrho= \text{\sf const}$ hypersurfaces 
jointly satisfy 
$$d\mu_g > \mathsf{A} \varrho^{-1} |d\varrho \wedge d\check{\mu}_g| = |dt\wedge d\check{\mu}_g|,$$
 for all $\varrho> \varrho_0$, where $\mathsf{A}=\mathsf{b}^4 /\mathsf{B}^3$, and  where we have 
 set $t:= \mathsf{A} \log \varrho$. 
For each $t$, we now let 
$\Sigma_t \approx \Sigma$ be  the level-set  $\varrho = e^{t/\mathsf{A}}$, 
and then 
define 
$$\digamma (t) = \int_{\Sigma_t} |s_g|~d\check{\mu}_g .$$
Setting $t_0 = \mathsf{A} \log \varrho_0$, we  then have 
$$
\int_{t_0}^\infty \digamma (t ) ~dt < \int_{\varrho \geq \varrho_0}  |s_g| ~d\mu_g < \int_M |s_g|~ d\mu_g < \infty .
$$
 Since $\digamma (t)$ is a  continuous positive function, this means that there must be 
an increasing sequence $t_j\to \infty$ with $\digamma (t_j) \to 0$. 
Setting   $\rad_j =  e^{t_j/\mathsf{A}}$ and defining 
$U_j$ to be the interior of $M_{\rad_j}$
thus defines    an exhaustion of $M=\cup_jU_j$ by nested pre-compact domains
 $$U_1\Subset U_2 \Subset \cdots \Subset U_j\Subset \cdots$$ 
with smooth boundary such that   condition \eqref{nod}
is satisfied:
$$\lim_{j\to \infty} \int_{\partial U_j} |s_g| ~ d\check{\mu}_g = 0.$$
On the other hand, since $\alpha_g< \frac{1}{2} \mathsf{B}\varrho^{-1}$,  
inequalities \eqref{squeeze} and \eqref{squash}  
tell us that 
\begin{eqnarray*}
|d\check{\mu}_g|&<&\mathsf{B}^3 \varrho^{-1}  |\varOmega\wedge\eta | \\
|W^+_g| ~|d\check{\mu}_g|&<& \mathsf{B}^4 \varrho^{-2}  |\varOmega\wedge\eta | 
\end{eqnarray*}
for all $\varrho > \varrho_0$, so there is   a positive constant $\mathsf{C}$ such that \begin{eqnarray*}
\int_{\partial U_j} 1~ d\check{\mu}_g &<&\frac{\mathsf{C}}{\rad_j} \\
\int_{\partial U_j}  |W^+_g| ~d\check{\mu}_g &<& \frac{\mathsf{C}}{\rad_j^2}  
\end{eqnarray*}
for all $j\gg 0$. Since $\lim_{j\to \infty} \rad_j = \infty$, this shows that  conditions \eqref{wynken} and \eqref{blynken} 
are also satisfied  by our exhaustion $U_j$ of $(M,g)$.   Theorem \ref{punch} therefore tells us   that 
  $(M,g)$ is a  strictly  extremal K\"ahler manifold, and that   the ALF gravitational instanton  
  $(M,h)$ is  consequently Hermitian, and carries  a non-trivial Killing field $\xi$, as claimed.  
 \end{proof}

 \section{Rigidity Results} \label{wagyu} 
 
 We will now use Theorem \ref{judy} in conjunction with the  Biquard-Gauduchon classification \cite[Theorem 8.2]{bg-toric}
 to prove  various  rigidity results.  To do this, however,  we will first need to check  that these two machines actually mesh correctly.

 \begin{lem} \label{cleared} 
For $(M,h)$   as in Theorem \ref{judy}, the  limit  at infinity of the Killing field $\xi$ is a non-zero constant multiple of  the vector field  
$T$ on $\Sigma$. In particular, 
the  action on $\Sigma$ induced by $\xi$ at infinity   preserves the triple  $(T, \eta,\gamma)$.
 \end{lem}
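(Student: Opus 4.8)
The plan is to read off the asymptotics of $\xi$ from the fact that it is the Hamiltonian vector field of $\alpha_g$ for the K\"ahler form $\omega$. From the proof of Theorem \ref{judy} we have $g=\alpha_g^2 h$, $s_g=6\alpha_g$, and $\xi=J\nabla^g s_g=6J\nabla^g\alpha_g$; since $\omega=g(J\cdot,\cdot)$, this is the exact identity $\xi\lrcorner\,\omega=-6\,d\alpha_g$. First I would check that $\xi$ has bounded length on the end: using $g=\alpha_g^2 h$ together with the fall-off \eqref{gentler}, i.e.\ $\alpha_g=O(\varrho^{-1})$ and $|\nabla\alpha_g|_h=O(\varrho^{-2})$, one finds $|\xi|_h=6\alpha_g^{-2}|\nabla\alpha_g|_h=O(1)$. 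Since $h$ is $C^3_1$-close to the ALF background $h_\infty=d\varrho^2+\varrho^2\gamma+\eta^2$, and its curvature is bounded, the standard Killing-field identity $\nabla^2\xi=-\mathcal R(\,\cdot\,,\xi)$ bounds $\nabla\xi$ and $\nabla^2\xi$ as well, so by Arzel\`a--Ascoli $\xi$ converges on the end (after passing to a subsequence) to a Killing field $\xi_\infty$ of the model $h_\infty$.

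I would then determine $\xi_\infty$ by letting $\xi\lrcorner\,\omega=-6\,d\alpha_g$ converge. Writing $c:=\lim \varrho\,\alpha_g>0$, the rescaled metric satisfies $g\to c^2(\varrho^{-2}d\varrho^2+\gamma+\varrho^{-2}\eta^2)$, whose compatible self-dual K\"ahler form is $\omega_\infty=c^2\varrho^{-2}\,d\varrho\wedge\eta+c^2\varOmega$, with $\varOmega$ the area form of $\gamma$. Contracting with $T$, and using $T\lrcorner\,\varOmega=0$ (as $\ker\gamma=\espan T$) together with $T\lrcorner\,(d\varrho\wedge\eta)=-d\varrho$, gives $T\lrcorner\,\omega_\infty=-c^2\varrho^{-2}\,d\varrho$. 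Provided $d\alpha_g$ is asymptotically radial, $d\alpha_g\to-c\,\varrho^{-2}\,d\varrho$, the limit of the identity reads $\xi_\infty\lrcorner\,\omega_\infty=6c\,\varrho^{-2}\,d\varrho$, forcing $\xi_\infty=-\tfrac{6}{c}\,T$, a non-zero multiple of $T$ (non-zero because $\alpha_g$ is bounded below by a positive multiple of $\varrho^{-1}$, so $c>0$). The final clause is then immediate, since $[T,T]=0$, $\mathcal L_T\eta=d(T\lrcorner\,\eta)+T\lrcorner\,d\eta=0$ and $\mathcal L_T\gamma=0$, so any multiple of $T$ preserves the triple $(T,\eta,\gamma)$.

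The crux --- and the step I expect to be the main obstacle --- is the asymptotic radiality of $d\alpha_g$: I must show that the variation of $\alpha_g$ along the $S^2$-slices is of strictly lower order than its radial derivative $\sim\varrho^{-2}$. The decay rates alone are not enough, since $|\nabla\alpha_g|_h=O(\varrho^{-2})$ permits a slice-tangential component of the same order, and such a component would contribute a full-size base-rotation term, valued in $\mathfrak{so}(3)$, to $\xi_\infty$, destroying the conclusion. For $h_0$ this is automatic: there $\alpha_{g_0}$ is the affine defining function of the edge at infinity of the moment polygon, so $\nabla^{g_0}\alpha_{g_0}=-J_0 T$ is exactly radial. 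For a nearby Ricci-flat $h$ I would instead argue that $\alpha_g$ tends to its infimal value uniformly along the $S^2$ ``edge at infinity,'' so that its slice-tangential derivative is subleading; equivalently, the leading $\varrho^{-3}$ part of $W^+_h$, which governs $\alpha_h$ (comparable to $|W^+_h|$ by \eqref{squeeze}) and hence $\alpha_g=\alpha_h^{1/3}$, is fixed by the ALF background and constant on the slices, with symmetry-breaking only at subleading order. Turning this into a rigorous statement, via the asymptotic expansion of Ricci-flat ALF metrics near the model, is where the real work lies.
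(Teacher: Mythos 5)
You have put your finger on the right spot yourself: the ``asymptotic radiality'' of $d\alpha_g$ is the whole content of the lemma, and the repair you sketch cannot work. The hypothesis $\| h-h_0\|_{C^3_1}<\varepsilon$ only gives $|\nabla^j(h-h_0)|\leq \varepsilon(1+\dist)^{-1-j}$, whence $W^+_h-W^+_{h_0}=O(\varepsilon\varrho^{-3})$ --- the \emph{same} order as $W^+_{h_0}$ itself. So the leading $\varrho^{-3}$ coefficient of $W^+_h$ is \emph{not} ``fixed by the ALF background'': it may acquire slice-dependent perturbations of relative size $\varepsilon$, and consequently the slice-tangential part of $d\alpha_g$ is only $O(\varepsilon\varrho^{-2})$, the same power of $\varrho$ as the radial part, however small $\varepsilon$ is. Two further steps are also unjustified: the existence of $c=\lim\varrho\,\alpha_g$ (the paper only ever establishes two-sided bounds $\mathsf{b}\,\varrho^{-1}<\alpha_g<\frac{1}{2}\mathsf{B}\,\varrho^{-1}$, not a limit), and the convergence of $\omega$ to your particular model form $\omega_\infty$. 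Finally, your Arzel\`a--Ascoli step is less innocent than it looks: limits of $\xi$ must be taken at base points diverging to infinity, where the local limit geometry is flat $\RR^3\times S^1$, whose \emph{bounded} Killing fields include all constant translations of $\RR^3$; so boundedness alone does not pin $\xi_\infty$ down to $\espan T$, and your identity $\xi\lrcorner\,\omega=-6\,d\alpha_g$ cannot exclude the translation without exactly the radiality statement you could not prove.

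The paper's proof never controls the slice dependence of $\alpha_g$ at leading order; instead it extracts the asymptotics from $\xi$ itself, using two inputs absent from your proposal. First, $f=\alpha_g^{-1}$ is \emph{proper}, hence attains a minimum at some $p\in M$, where $\xi=-6J\nabla f$ vanishes; the flow of $\xi$ then preserves distance spheres about $p$, so $\xi$ is orthogonal to every geodesic through $p$. Second, the Killing identity $\nabla_a\nabla_b\xi^c={\mathcal{R}^c}_{bad}\xi^d$, combined with $|\xi|$ bounded and $\mathcal{R}=O(\varrho^{-3})$, gives $\nabla\nabla\xi=O(\varrho^{-3})$; integrating twice along geodesics shows that $\xi \bmod T$ is, up to $O(\varrho^{-1})$, an affine-linear field on the Euclidean $\RR^3$ base of the asymptotic fibration. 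Boundedness kills the linear, $\mathfrak{so}(3)$-valued part --- precisely the rotation term you feared --- while orthogonality to the open cone of directions of geodesics through $p$ kills the constant (translational) part, which boundedness cannot. This leaves $\xi=uT+O(\varrho^{-1})$; the Killing equation together with $\nabla_{(a}T_{b)}=O(\varrho^{-2})$ then forces $u=\mathrm{const}+O(\varrho^{-1})$, and the constant is non-zero because \eqref{gentler} makes $f$ asymptotically bounded below by a positive multiple of $\varrho$, preventing $|\xi|=6|\nabla f|$ from decaying. So your overall strategy (bounded Killing field, identify the limit at infinity) matches the paper's opening moves, but the decisive mechanism --- the zero of $\xi$ at the minimum of the proper conformal factor, exploited through the Jacobi-field structure of Killing fields --- is missing, and without it the gap you flagged is genuine and, by the $\varepsilon\varrho^{-2}$ counting above, not fillable along the route you propose.
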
 
 
\begin{proof} By construction, the Killing field $\xi$  is a given by 
$$ \xi^a = {J_b}^ag^{bc}\nabla_c s_g = 6 {J_b}^ag^{bc}\nabla_c \alpha_g = 6 {J_b}^a f^2h^{bc}\nabla_c f^{-1} = - 6 {J_b}^a h^{bc}\nabla_c f,$$
where $f=\alpha_g^{-1}$.  Our asymptotics for $\alpha$ guarantee that, for $\varrho \gg 0$, $\alpha_g$ is bigger than a positive constant times $\varrho^{-1}$, 
and that $|\nabla \alpha_g|_h$ is less than a constant times $\varrho^{-2}$, so it follows that  $|\nabla f|_h= \alpha_g^{-2} |\nabla \alpha_g|_h$ is uniformly bounded. It therefore follows that $|\xi| =6 |\nabla f|_h$ is uniformly bounded, too. 

On the other hand,   since $\alpha_g\to 0$ at infinity, $f = \alpha_g^{-1}$ is a smooth
proper function on $M$, and therefore achieves its  minimum at some $p\in M$. Since $|df|_p=0$, it therefore follows that $\xi=-6J\nabla f$ has
a zero at $p$. The flow of the Killing field $\xi$ therefore preserves the distance to $p$, so it  follows that $\xi$
is orthogonal to every geodesic  passing through this base-point $p$.

However,  because $\xi$ is a Killing field, it automatically satisfies 
\begin{equation}
\label{jack}
\nabla_a\nabla_b \xi^c= {\mathcal{R}^c}_{bad}\xi^d
\end{equation}
on $(M,h)$, 
primarily as a reflection of  the fact that its restriction to any geodesic is a Jacobi field.  Since $|\xi |_h$ is 
uniformly bounded and $|\mathcal{R}|_h = O(\varrho^{-3})$, it therefore follows that $\nabla\nabla \xi = O(\varrho^{-3})$. 
On the other hand, the asymptotic local model metric $d\varrho^2 + \varrho^2 \gamma + \eta^2$ has a Riemannian 
submersion to Euclidean $\RR^3$ whose fibers are tangent to $T$. By restricting this equation to geodesics and integrating, it
 therefore follows that, along any slice transverse to $T$, 
the projection  $\xi \bmod T$ differs from an affine-linear function   $\RR^3\to \RR^3$  by terms of order $\varrho^{-1}$; and since   $\xi \bmod T$
is uniformly bounded, it therefore follows that $\xi \bmod T$ actually tends to a constant (i.e. parallel) vector field on  Euclidean $\RR^3$. 
But since
$\xi$ is orthogonal to every geodesic through $p$, and since the tangent directions of such geodesics project to an open cone
in $\RR^3$, this leads to an immediate  contradiction unless this constant field is zero. This shows that $\xi = uT + \mho$
for some smooth bounded function $u$, where the error term $\mho$ satisfies 
$\mho = O(\varrho^{-1})$ and $\nabla \mho = O(\varrho^{-2})$. 
However, because $T$ is a bounded Killing field for the model metric $d\varrho^2 + \varrho^2 \gamma + \eta^2$, our fall-off 
hypothesis \eqref{falloff} guarantees that $\nabla_{(a}T_{b)}=O(\varrho^{-2})$ with respect to $h$. Hence
$$0 = \nabla_{(a}\xi_{b)}=  \nabla_{(a}uT_{b)} + O(\varrho^{-2}) = T_{(a} \nabla_{b)}u + O(\varrho^{-2})$$
and we therefore must have $u = \text{const} + O(\varrho^{-1})$. Moreover, the relevant constant
must be non-zero, because $|\xi|= 6|\nabla f|$, and \eqref{gentler} forces $f= \alpha^{-1}$ to be   asymptotically bigger than some positive 
constant times $\varrho$.  
 Thus, some constant multiple $\widehat{\xi}$ of  $\xi$ 
satisfies $\widehat{\xi} = T + O(\varrho^{-1})$. In particular, the  action on 
$\Sigma$  induced  at infinity  by  $\xi$ coincides, up to reparameterization,  with the action of $T$, and so  preserves  $(T,\eta, \gamma)$, as claimed. 
\end{proof}

    \begin{cor} \label{ready}  Let $(M,h)$ be as 
  described by Theorem \ref{judy},  and suppose that that there is an isometric action of the $2$-torus $\mathbb{T}^2$
  on $(M,h)$, such that    the constructed Killing field $\xi$ arises from some element of the Lie algebra $\mathfrak{t}^2$ of $\mathbb{T}^2$.
  Then $(M,h)$ is a non-K\"ahler toric Ricci-flat Hermitian ALF manifold, in the precise technical  sense required  by Biquard and Gauduchon.  
    \end{cor}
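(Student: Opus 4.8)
The plan is to assemble Corollary \ref{ready} from pieces that are already in hand: Theorem \ref{judy} supplies nearly all the listed adjectives, and the remaining work is purely to reconcile the Killing field $\xi$ with the given $\mathbb{T}^2$-action so as to land squarely in the hypotheses of \cite[Theorem 8.2]{bg-toric}. First I would recall that Theorem \ref{judy} already tells us $(M,h)$ is a Ricci-flat, Hermitian, non-K\"ahler ALF gravitational instanton carrying the canonical Killing field $\xi=J\nabla s_g$. So \emph{Ricci-flat}, \emph{Hermitian}, \emph{ALF}, and \emph{non-K\"ahler} are immediate, and what must be verified is only the word \emph{toric}, i.e.\ that the isometry group contains an effective isometric $\mathbb{T}^2$-action of the type demanded by Biquard--Gauduchon.

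\textbf{The torus and its relation to $\xi$.} By hypothesis we are handed an isometric $\mathbb{T}^2$-action on $(M,h)$ together with an element $X\in\mathfrak{t}^2$ whose induced vector field is $\xi$. The immediate point is that $\xi$ is then genuinely one of the circle generators of the torus, so it integrates to a closed one-parameter subgroup and the whole torus commutes with the flow of $\xi$. I would note that $\xi$ is nontrivial (Theorem \ref{judy}) and that, since $\xi=J\nabla s_g$ is canonically attached to $h$, every isometry of $h$ commutes with its flow; in particular the full $\mathbb{T}^2$ preserves $\xi$ and preserves the conformally related extremal K\"ahler structure $(g,J,\omega)$ up to the usual sign ambiguity in $J$.

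\textbf{Matching the asymptotic data.} The heart of the check is the asymptotic behavior at infinity, and here I would lean entirely on Lemma \ref{cleared}. That lemma already establishes that (a suitable constant multiple of) $\xi$ satisfies $\widehat\xi=T+O(\varrho^{-1})$, so the action induced at infinity by $\xi$ agrees, up to reparameterization, with the model action generated by $T$ and therefore preserves the whole triple $(T,\eta,\gamma)$. Consequently the $\mathbb{T}^2$-action, which contains $\xi$ as one generator and commutes with it, induces at infinity an action on $\Sigma$ by the expected $2$-torus of symmetries of $(T,\eta,\gamma)$; this is exactly the asymptotic toric structure required in Definition \ref{alfie} and in \cite[Definition 1.1]{bg-toric}. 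The second generator of $\mathbb{T}^2$ must act along the remaining circle direction of $\Sigma$ compatibly with $\eta$ and $\gamma$, since it commutes with $\xi$ and is an isometry of an ALF end of the stated topological type.

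\textbf{Conclusion and the main obstacle.} Putting these together, $(M,h)$ is a non-K\"ahler, Ricci-flat, Hermitian, ALF $4$-manifold equipped with an effective isometric $\mathbb{T}^2$-action respecting the asymptotic triple $(T,\eta,\gamma)$ --- precisely the data entering the Biquard--Gauduchon classification, which yields the claim. I expect the main obstacle to be not the existence of the torus (that is assumed) but the bookkeeping that confirms the action satisfies \emph{all} of the technical normalizations of \cite{bg-toric}: effectiveness of the $\mathbb{T}^2$-action, the correct sign-ambiguous structure $\pm(T,\eta)$, and the requirement that the torus act by isometries in the asymptotic model rather than merely asymptotically. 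Lemma \ref{cleared} disposes of the $\xi$-generator; the remaining care lies in verifying that the transverse generator's asymptotics mesh with $\gamma$ and $\eta$, which follows from the commutation $[\xi,\cdot]=0$ and the fall-off \eqref{falloff}, but must be stated so as to match Biquard--Gauduchon's conventions exactly.
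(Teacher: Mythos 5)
Your proposal is correct and follows essentially the same route as the paper: the adjectives come from Theorem \ref{judy}, and the single substantive point is that Lemma \ref{cleared} produces an element $\hat{\xi}\in\mathfrak{t}^2$ whose action at infinity coincides with the flow of $T$, which is exactly what \cite[Definition 1.2]{bg-toric} demands. Your extra bookkeeping about the second torus generator is harmless but unnecessary --- the Biquard--Gauduchon definition only requires that \emph{one} Lie-algebra element match $T$ asymptotically, so the paper's proof is just the one-line appeal to Lemma \ref{cleared}.
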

  \begin{proof} Lemma \ref{cleared} guarantees that there is an element  $\hat{\xi}\in \mathfrak{t}^2$ whose action  at infinity 
coincides with the flow of the vector field $T$ on $\Sigma$,   exactly  as  
  required by \cite[Definition 1.2]{bg-toric}.     \end{proof}  
  
%

   With these basic facts in hand, we now prove our first rigidity result. 
    
  \begin{thm} 
  \label{baby} Let $(M,h)$ be as 
  described by Theorem \ref{judy}, and suppose that the constructed Killing field $\xi$ is not periodic. 
  Then $(M,h)$ is one of  the  toric  ALF 
   gravitational instantons classified by Biquard and Gauduchon \cite{bg-toric}. 
  \end{thm}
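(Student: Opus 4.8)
The plan is to promote the single Killing field $\xi$ supplied by Theorem \ref{judy} to a genuine isometric $\mathbb{T}^2$-action, after which Corollary \ref{ready} and the Biquard--Gauduchon classification \cite[Theorem 8.2]{bg-toric} will finish the argument. The structural fact that makes this possible is that $\xi$ has a zero: as the proof of Lemma \ref{cleared} records, the proper function $f=\alpha_g^{-1}$ attains a minimum at some $p\in M$, and $\xi=-6J\nabla f$ therefore vanishes at $p$. Consequently the flow $\phi_t$ of $\xi$ fixes $p$ for every $t\in\RR$, and I would build everything around this fixed point.

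First I would form the closure $G:=\overline{\{\phi_t : t\in\RR\}}$ inside the isometry group $\Iso(M,h)$, which is a Lie group because $h$ is complete. Since fixing $p$ is a closed condition, every element of $G$ fixes $p$, so $G$ sits inside the isotropy subgroup $\Iso_p(M,h)$. An isometry of a connected manifold is determined by its $1$-jet at a point, so the isotropy representation embeds $\Iso_p(M,h)$ as a closed, hence compact, subgroup of $SO(T_pM)=SO(4)$; the image lands in $SO(4)$ rather than $O(4)$ because $G$ is connected. As the closure of the abelian, connected one-parameter group $\{\phi_t\}$, the group $G$ is itself compact, connected and abelian, i.e.\ a torus $\mathbb{T}^k$, and $\xi$ is the infinitesimal generator of a dense one-parameter subgroup, so $\xi\in\mathfrak{t}^k$.

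The crux is then to show $k=2$. On one side, the injection $\mathbb{T}^k\hookrightarrow SO(4)$ forces $k\le 2$, since $SO(4)$ has rank $2$. On the other side, the hypothesis that $\xi$ is \emph{not} periodic excludes $k\le 1$: the case $k=0$ would give $\xi=0$, while $k=1$ would make $t\mapsto\phi_t$ a nontrivial homomorphism $\RR\to S^1$ and hence periodic, contrary to assumption. Thus $k=2$, and $G\cong\mathbb{T}^2$ acts isometrically on $(M,h)$ with $\xi$ arising from an element of $\mathfrak{t}^2$, which is exactly the hypothesis required by Corollary \ref{ready}.

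With the $\mathbb{T}^2$-action established, I would invoke Corollary \ref{ready} to conclude that $(M,h)$ is a non-K\"ahler, toric, Ricci-flat, Hermitian ALF manifold in the technical sense of \cite[Definition 1.2]{bg-toric}, and then quote the classification \cite[Theorem 8.2]{bg-toric} to identify it with one of the four families of toric ALF gravitational instantons. The step I expect to be the main obstacle is the compactness of $G$: for a bounded Killing field on a noncompact manifold the closure of the flow need not be compact, and no torus would arise. It is precisely the fixed point $p$---the minimum of the proper conformal factor $f$---that traps $G$ inside the compact isotropy group $\Iso_p(M,h)$ and thereby makes the rank count, and hence the whole argument, go through.
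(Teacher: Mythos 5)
Your proposal is correct, and it proves the theorem by a genuinely leaner route than the paper, while sharing the same endgame. Both arguments close identically: take the closure $\mathbb{T}=\overline{\exp(\RR\xi)}$ of the flow inside the isometry group, trap it in a compact Lie group of rank $2$ so that non-periodicity of $\xi$ forces $\mathbb{T}\cong \mathbb{T}^2$, and then feed the resulting torus action into Corollary \ref{ready} and the Biquard--Gauduchon classification. Where you diverge is in how the compact rank-$2$ group is produced. The paper analyzes the entire zero set $X$ of $\xi$: since $|\nabla f|$ tends to a non-zero constant at infinity, $X=\crit (f)$ is compact; each component of $X$ is a point or a totally geodesic compact complex curve; the Euler-characteristic count $\chi(X)=\chi(M)>0$ produces an $\Iso_0(M,h)$-orbit $Y$ that is a point or a $\CP_1$; and equivariance of the exponential map on the normal bundle of $Y$ yields a faithful representation $\Iso_0(M,h)\hookrightarrow \mathbf{U}(2)/\ZZ_\ell$, whose rank-$2$ bound does the work. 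You instead use only the single fixed point $p$ at the minimum of the proper function $f$, note that fixing $p$ is a closed condition so that $\mathbb{T}\subset \Iso_p(M,h)$, and run the rank count through the faithful isotropy representation in $SO(T_pM)\cong SO(4)$, again of rank $2$. Your version bypasses the Euler-characteristic and holomorphic-curve analysis entirely; what the paper's extra work buys is a faithful \emph{linear} representation of the full identity component of the isometry group (together with the structure of the fixed-point set), which is more information than Theorem \ref{baby} actually needs. One point to phrase more carefully: the logic should be that $\Iso_p(M,h)$ is compact (Myers--Steenrod; concretely, a limit of isometries fixing $p$ is again an isometry fixing $p$, by Arzel\`a--Ascoli and completeness of $h$), whence the closed subgroup $G$ is compact, and \emph{then} the faithful isotropy representation carries the compact group $G$ onto an automatically closed torus in $SO(4)$ --- rather than deducing compactness from closedness of the image, as your wording suggests; also recall that $k=0$ is excluded because Theorem \ref{judy} supplies $\xi\neq 0$. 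These are routine repairs, not gaps, and your instinct that the fixed point is the crux --- it is exactly what substitutes for the paper's equivariant tubular-neighborhood argument --- is sound.
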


  \begin{proof} The action of the identity component   $\Iso_0 (M,h)$
  of the isometry group preserves the self-dual Weyl curvature $W^+_h$, 
 and  hence  its top eigenvalue $\alpha_h: M \to \RR^+$, and hence  the smooth  proper 
   function $f= \alpha_h^{-1/3}$.  The proof of Lemma \ref{cleared}   shows that $|\nabla f|$ tends to a non-zero constant at infinity, 
 so   the set $X$ of critical points of $f$ is therefore  compact. However, this $X$ is  the zero
   set of both the Killing field $\xi = -6J\nabla f$ and the holomorphic vector field $\xi^{1,0}$. Each connected component
   of the $\Iso_0 (M,h)$-invariant subset $X\subset M$ is therefore either a point or a 
    totally geodesic compact complex curve. Since we also know that 
  $\chi (X) = \chi (M) >  0$, it   follows that some component of $X_0$ of $X$, and hence some orbit $Y\subset X_0$,   is either a point or a $\CP_1$. 
    Equivariance of the exponential map from the normal bundle of $Y$ to $M$  then  guarantees that 
 this gives us   a  faithful representation   $\Iso_0 (M,h)\hookrightarrow \mathbf{U}(2)/\ZZ_\ell$ of the isometry group 
 into some finite quotient of $\mathbf{U}(2)$. 
   
  Let us now consider   the  non-trivial  Killing field $\xi$ as an element of the Lie algebra 
   $\mathfrak{iso}_0 (M,h)$, and  examine the $1$-parameter subgroup 
   $\exp ( \RR \xi)= \{ \exp (t\xi)~|~ t \in \RR\} \subset \Iso_0 (M,h)$ that it determines. The closure 
   of this subgroup
   is  then compact, Abelian, and connected, and so is  a torus $\mathbb{T} =\overline{\exp ( \RR \xi)} \subset\Iso_0 (M,h)\subset \mathbf{U}(2)/\ZZ_\ell$.
  If $\xi$ is periodic, this torus will just be a circle. Otherwise, $\mathbb{T}$   must be a $2$-torus, since $\mathbf{U}(2)$ has rank $2$. 
  In the latter case, the ALF gravitational instanton $(M,h)$ then becomes  Hermitian, non-K\"ahler, and toric, and  so, by Corollary \ref{ready},  falls 
  within  the purview of  the Biquard-Gauduchon classification  \cite[Theorem 8.2]{bg-toric}. 
     \end{proof}

     \begin{cor}
     \label{kindle}
      Let $(M,h_0)$ be a toric Hermitian  ALF gravitational instanton for which the corresponding vector field $T$ on $\Sigma$ 
     is not periodic. Then any Ricci-flat metric $h$ on $M$ which is sufficiently $C^3_1$ close to $h_0$ must be one of the 
      toric  ALF 
   gravitational instantons classified by Biquard-Gauduchon.
     \end{cor}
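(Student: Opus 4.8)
The plan is to run the corollary's hypotheses through the chain Theorem \ref{judy} $\to$ Lemma \ref{cleared} $\to$ Theorem \ref{baby}, so that its conclusion becomes simply the conclusion of Theorem \ref{baby}. The only genuinely new thing to verify is that the non-periodicity of the model field $T$ on $\Sigma$ forces non-periodicity of the globally-defined Killing field $\xi$ produced by Theorem \ref{judy}; everything else is assembly. First I would check that $(M,h_0)$ actually satisfies the hypotheses of Theorem \ref{judy}, the only missing ingredient being that $h_0$ is \emph{non}-K\"ahler. This follows from the assumption that $T$ is not periodic: were $h_0$ K\"ahler, then, being Ricci-flat of real dimension four and hence scalar-flat, it would satisfy $W^+_{h_0}\equiv 0$, so it would be anti-self-dual and therefore locally hyper-K\"ahler; but every ALF hyper-K\"ahler gravitational instanton carries a Gibbons--Hawking type triholomorphic circle action whose generator is a nonzero multiple of $T$, which would make $T$ periodic, contrary to hypothesis. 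Thus $(M,h_0)$ is a toric, Hermitian, non-K\"ahler ALF gravitational instanton.

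Theorem \ref{judy} then applies to any Ricci-flat $h$ that is sufficiently $C^3_1$-close to $h_0$: such an $h$ is again a Hermitian ALF gravitational instanton carrying the distinguished Killing field $\xi=J\nabla s_g$, and Lemma \ref{cleared} tells us that, after rescaling by a suitable nonzero constant, $\widehat{\xi}=T+O(\varrho^{-1})$ in the end region. The heart of the argument is then to deduce from this that $\xi$ is not periodic. Suppose it were; then some positive time-$\tau$ flow $\exp(\tau\xi)$ would be the identity on all of $M$. Restricting this isometry to the level sets $\{\varrho=R\}\cong\Sigma$ of the asymptotic model and letting $R\to\infty$, the $O(\varrho^{-1})$ error drops out and the flow of $\xi$ converges to the flow of the nonzero multiple of $T$ provided by Lemma \ref{cleared}; hence the corresponding time-$\tau$ flow of $T$ on $\Sigma$ would have to be the identity, making $T$ periodic and contradicting our hypothesis. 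Therefore $\xi$ is not periodic.

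With $(M,h)$ realized exactly as in Theorem \ref{judy} and its constructed Killing field $\xi$ shown to be non-periodic, Theorem \ref{baby} applies directly and identifies $(M,h)$ as one of the toric ALF gravitational instantons of the Biquard--Gauduchon classification, as claimed. The main obstacle is the middle step, namely transferring non-periodicity from the asymptotic model field $T$ to the intrinsic Killing field $\xi$ of the perturbed metric; this is precisely where the sharp asymptotic control $\widehat{\xi}=T+O(\varrho^{-1})$ furnished by Lemma \ref{cleared} is indispensable, since a weaker estimate would not prevent a periodic $\xi$ from having a degenerate or non-periodic limiting action at infinity.
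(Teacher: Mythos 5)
Your proposal is correct and takes essentially the same route as the paper's own proof, which likewise uses Lemma \ref{cleared} to conclude that the constructed Killing field $\xi$ of the perturbed metric is asymptotic to a constant multiple of $T$, so that non-periodicity of $T$ transfers to $\xi$ (you merely spell out the flow-convergence argument the paper leaves implicit), after which Theorem \ref{baby} gives the classification. The one caveat is your preliminary verification that $h_0$ is non-K\"ahler (a hypothesis the paper tacitly carries over from Theorem \ref{judy}): your justification --- that every ALF hyper-K\"ahler gravitational instanton carries a triholomorphic circle action --- is false for generic $D_k$ spaces, which admit no continuous symmetries at all, although the weaker fact you actually need, namely that $T$ is periodic on any hyper-K\"ahler ALF end, does hold, since the classification of such ends shows they are asymptotically fibered by circles generated by $\pm T$, in both the $A_k$ and $D_k$ cases.
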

     \begin{proof} The proof of Lemma \ref{cleared} shows that the vector field $T$ arising from $h_0$ is also the limit
     at infinity of a constant multiple the constructed Killing field $\xi$ of any  $C^3_1$-close Ricci-flat metric $h$. If $T$ is not periodic, it thus 
     follows that the constructed  Killing field $\xi$ of $h$ cannot be periodic, either. The  claim is therefore an immediate consequence of
     Theorem \ref{baby}. 
          \end{proof}

A theorem of    Aksteiner, Andersson, Dahl, Nilsson, and Simon \cite{AADNS}  classifies  those 
  ALF gravitational   instantons that  carry  an isometric $S^1$-action and  are diffeomorphic to either a   Kerr or a Taub-bolt space.  
  Quoting their result  in conjunction with  
  Theorem \ref{judy} 
  would  now allow us to prove a rigidity result in these two cases. 
  However, we will instead  buttress the claims of \cite{AADNS}  
  by   proving this rigidity theorem in a  self-contained way,  by building   directly on the results already obtained in this article.
  
    \setcounter{main}{1}       
  \begin{main}
  \label{bambino}
 Let $(M,h_0)$ be 
  a Kerr or Taub-bolt  gravitational instanton, 
  and let $h$ be another Ricci-flat metric on $M$ that is sufficiently $C^3_1$-close to $h_0$. 
  Then $(M,h)$ is once again a Kerr or Taub-bolt gravitational instanton. 
    \end{main}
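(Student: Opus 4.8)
The plan is to capitalize on the fact, noted in the discussion above, that a Kerr or Taub-bolt metric $h_0$ satisfies both $\det(W^+_{h_0})>0$ and $\det(W^-_{h_0})>0$, so that the entire machinery of \S\ref{woohoo}--\S\ref{bullseye} becomes available for \emph{both} orientations of $M$. Since $3\sqrt6\,\det(W^\pm_{h_0})=|W^\pm_{h_0}|^3$ has precisely $\varrho^{-9}$ fall-off in each orientation, the stability estimates used in the proof of Theorem \ref{judy} apply verbatim to either choice of orientation: for $\|h-h_0\|_{C^3_1}<\varepsilon$ small one still has $\det(W^+_h)>0$ and $\det(W^-_h)>0$. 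Applying Theorem \ref{judy} once per orientation then shows that $(M,h)$ is \emph{ambi-K\"ahler}: it is conformal to a strictly extremal K\"ahler metric $g^+=(f^+)^{-2}h$ with self-dual K\"ahler form $\omega^+$, and simultaneously conformal to a strictly extremal K\"ahler metric $g^-=(f^-)^{-2}h$ with anti-self-dual K\"ahler form $\omega^-$, furnishing Killing fields $\xi^+=J^+\nabla s_{g^+}$ and $\xi^-=J^-\nabla s_{g^-}$.

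The first thing to record is that these two fields are necessarily \emph{proportional}: by Lemma \ref{cleared}, applied in each orientation, both $\xi^+$ and $\xi^-$ tend at infinity to nonzero multiples of the \emph{same} asymptotic field $T$ (the data of Definition \ref{alfie} are built from the metric alone and are insensitive to orientation), so a suitable combination $\zeta=\xi^+-\mu\,\xi^-$ decays to zero at infinity; as $\zeta$ is Killing and $h$ is Ricci-flat, $|\zeta|^2_h$ is subharmonic, and the maximum principle forces $\zeta\equiv0$. Thus, up to scale, both orientations yield a single Killing field $\xi$, asymptotic to the ALF fibre $T$. If $\xi$ is \emph{not} periodic, Theorem \ref{baby} already exhibits $(M,h)$ as one of the four toric ALF instantons of \cite{bg-toric}; since these realise four distinct diffeotypes while $M$ retains the diffeotype of the Kerr (resp.\ Taub-bolt) manifold, $(M,h)$ lies in the Kerr (resp.\ Taub-bolt) family. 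For Kerr and Taub-bolt, however, the fibre $T$ --- Euclidean time, resp.\ the NUT circle --- is itself periodic, so the genuinely operative case is that in which $\xi$ generates only a circle $S^1\subset\Iso_0(M,h)$. \textbf{This periodic case is the crux, and the place where the two-orientation hypothesis must do real work:} the circle recovered from $\xi$ is merely the rotation of the ALF fibre, and one must still manufacture the second, axial Killing field needed to promote $S^1$ to the $\mathbb{T}^2$ required by the classification.

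Here the plan is to extract the missing symmetry directly from the ambi-K\"ahler structure, rather than from $\xi^-$ (which, being proportional to $\xi$, contributes nothing new). The pair $(\omega^+,\omega^-)$, equivalently the pair of conformal factors $(f^+,f^-)$, is precisely the data of a Hamiltonian $2$-form on the K\"ahler surface $(M,g^+,J^+)$ in the sense of Apostolov, Calderbank, and Gauduchon; in real dimension four the two eigenvalue-functions of such a form are Poisson-commuting K\"ahler potentials whose Hamiltonian vector fields span a torus of isometries. Concretely I would show that $\eta:=J^+\nabla^{g^+}\!f^-$, the $J^+$-gradient of the \emph{other} conformal factor, is again Killing for $g^+$, commutes with $\xi$, and is independent of $\xi$ exactly because $\alpha^+=\alpha_h$ and $\alpha^-$ are functionally independent top eigenvalues of $W^+_h$ and $W^-_h$. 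As $\xi$ and $\eta$ both preserve $h$, their flows generate an isometric $\mathbb{T}^2$-action whose Lie algebra contains $\xi$, so Corollary \ref{ready} applies and $(M,h)$ is a non-K\"ahler toric Ricci-flat Hermitian ALF manifold. Invoking \cite[Theorem 8.2]{bg-toric} then places $(M,h)$ in one of the four families; the ambi-K\"ahler condition $\det(W^\pm_h)>0$ excludes the Chen--Teo and reverse-oriented Taub--NUT metrics, and the diffeotype of $M$ identifies $(M,h)$ as Kerr or Taub-bolt, as required.

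I expect the main obstacle to be precisely this construction of $\eta$ in the periodic case: verifying that the ambi-K\"ahler data integrate to a genuine Hamiltonian $2$-form with the regularity needed on a non-compact ALF space, that its two potentials are functionally independent (so that $\eta\not\parallel\xi$) off a degeneracy locus, and that $\eta$ extends smoothly across that locus as a Killing field. The decay estimates \eqref{tumbler}--\eqref{gentler} on $\alpha_h$ and its $J^-$-analogue should control $\eta$ at infinity, but the interior set where $\alpha^+$ and $\alpha^-$ fail to be independent will demand the same kind of eigenvalue-regularity argument already used for $\alpha$ in \S\ref{woohoo}.
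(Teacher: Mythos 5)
Your scaffolding is the same as the paper's: apply Theorem \ref{judy} once per orientation (using $\det(W^\pm_{h_0})>0$ and the stability of the fall-off estimates), conclude that $(M,h)$ is ambi-K\"ahler in the sense of \cite{acgI}, extract a second Killing field from the ambi-K\"ahler structure when the two extremal fields are proportional, and finish via Corollary \ref{ready}, the Biquard--Gauduchon classification, and the diffeotype of $M$. Your preliminary observation that $\xi^+$ and $\xi^-$ must in fact be proportional --- both tend to multiples of the same asymptotic field $T$, so a suitable combination decays, and $|\zeta|^2$ is subharmonic on a Ricci-flat manifold --- is a correct refinement that the paper does not need: it simply splits into the case where the two fields are independent (immediately toric) and the case where they are dependent.

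The genuine gap is exactly where you predicted trouble, and your proposed fix would not survive contact with it. First, you never establish that $\eta:=J^+\nabla^{g^+}\!f^-$ is Killing; in the Hamiltonian-$2$-form formalism the Killing fields arise from the momenta (trace and pfaffian) of the form, and $f^-$ by itself is not such a momentum. Second, and more seriously, your independence claim rests on the assertion that $\alpha^+$ and $\alpha^-$ are functionally independent, which is unproved and is \emph{false} in general: on the Riemannian Schwarzschild metric --- the center of the very Kerr family you are perturbing --- the orientation-reversing isometry $t\mapsto -t$ interchanges the spectra of $W^+$ and $W^-$, forcing $\alpha^+\equiv\alpha^-$ identically. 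So the degeneracy is not confined to a locus across which one extends; it can hold everywhere. The paper's actual mechanism in the dependent case is different: after normalizing so that $\xi=J_+\nabla^{g_+}\alpha_+=J_-\nabla^{g_-}\alpha_-$, it forms
$\mathscr{S}=\frac{1}{2}\bigl(\alpha_+^{-2}+\alpha_-^{-2}\bigr)I+(\alpha_+\alpha_-)^{-1}J_+\circ J_-$,
cites \cite[Appendix B.5]{acgI} to see that $g(\mathscr{S}\cdot,\cdot)$ is a Killing tensor, so that $\mathscr{S}(\xi)$ is a Killing field commuting with $\xi$; and when even $\mathscr{S}(\xi)\equiv 0$ it invokes \cite[Proposition 12]{acgI} to conclude that $(M,g,J_+)$ is a product of extremal K\"ahler curves, one of which the asymptotics force to be a round $S^2$ --- a configuration that is \emph{still} toric, though for a reason having nothing to do with functional independence. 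Your proposal, as written, has no route to the torus in precisely this degenerate situation, so the periodic/degenerate case you correctly identify as the crux remains open in your argument; the Killing-tensor construction plus the product-case analysis is the missing idea.
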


\begin{proof} If $(M, h_0)$ is Taub-bolt  or belongs to the Kerr family, then both $(M,h_0)$ and its reverse-oriented version 
$(\overline{M}, h_0)$ are non-K\"ahler and Hermitian toric ALF, and so, by Corollary \ref{ready}, both   fall under the purview of the Biquard-Gauduchon classification. 
In particular, $(M,h_0)$ then satisfies  both $\det (W^+) > 0$ and $\det(W^-) > 0$, and one can therefore apply Theorem \ref{judy}
with respect to either orientation of $M$. If $h$ is a Ricci-flat metric on $M$ that is sufficiently $C^3_1$ close to 
$h_0$, one therefore deduces that $M$ admits  a pair of complex structures $\{ J_+ , J_-\}$ that are respectively compatible with the two 
different orientations of $M$, and  a pair of  extremal K\"ahler metrics $\{ g_+, g_-\}$ that are  compatible with $J_\pm$, respectively,
 where 
$h= \alpha_\pm^{-2} g_\pm$ with  $\alpha_\pm  > 0$   the  top eigenvalues of  $W^\pm_{g_\pm}$, respectively, which is to say  that
$\alpha_\pm^3$ are the  top eigenvalues of $W^\pm_h$. In the terminology 
of \cite{acgI}, the Riemannian manifold $(M,h)$ is therefore  {\sf ambi-K\"ahler}. By  \cite[Proposition 12]{acgI}, it therefore
follows that the Bach-flat manifold $(M,h)$ is at least {\em locally  ambitoric}, and what follows is simply a verification 
that this conclusion actually follows {\em globally} in our case. 

Indeed,  since  the conformal class $[h]$ is Bach-flat, both of the 
 K\"ahler metrics $g_\pm$ must be  extremal, and this then gives rise to 
 two non-trivial Killing fields $J_\pm \nabla^{g_\pm} \alpha_\pm$ on  $(M,h)$. If these Killing fields are linearly independent,
 $(M,h)$ is toric, and we are done.  On the other hand, if they are linearly dependent, one can 
 produce a second Killing field using  case (iii) of the proof of \cite[Proposition 11]{acgI}. Indeed, let $\xi= J_+ \nabla^{g_+} \alpha_+$ 
 be the Killing field associated with $g_+$. After  multiplying $g_-$ by a suitable constant, and replacing $J_-$ with $-J_-$  if necessary, we may 
 then arrange to also have 
 $\xi= J_- \nabla^{g_-} \alpha_-$. Now 
 define
 $\mathscr{S} \in \End( TM )$ by 
 $$
 \mathscr{S}  = \frac{1}{2}  \left(\frac{1}{\alpha_+^2}+ 
 \frac{1}{\alpha_-^2}
 \right) I + \frac{1}{\alpha_+\alpha_-}J_+\circ J_-.
 $$
It then follows from  \cite[Appendix B.5]{acgI}  that $g(\mathscr{S}\cdot , \cdot )$ is a Killing tensor, and that $\mathscr{S} (\xi)$ 
is therefore a Killing field that commutes with $\xi$. 
If $\mathscr{S} (\xi)$ is not the zero field, then 
$(M,h)$ toric. Otherwise, by  \cite[Proposition 12]{acgI},  $(M,g, J_+)$ is 
 the product of two extremal K\"ahler curves, one
 of which has constant curvature, and our asymptotics then guarantee that the latter curve is moreover a round $2$-sphere. 
 In every possible case, $(M,h)$ is  therefore toric, and, in light of Corollary \ref{ready},  the Biquard-Gauduchon classification 
 \cite{bg-toric}  therefore applies. 
The diffeotype of $M$ therefore forces  $(M,h)$ either to  be Taub-bolt, or to  belong  to 
 the   Kerr family. 
\end{proof} 

\noindent
{\bf Added note.} Contemporaneously with the appearance of the first version of this article on the {\sf arXiv}, an e-print by Mingyang Li \cite{mingyang} announced a proof that Hermitian ALF gravitational instantons are always toric. If we take this result for granted, Theorem \ref{bambino} can then 
be improved to just assume that $(M,h_0)$ is a gravitational  instanton appearing in the Biquard-Gauduchon classification,  and  then 
conclude that $(M,h)$ must also belong to the same  family.   Of course, in light of Corollary \ref{kindle}, the gist   of this improvement 
only concerns  the case when $T$ is {\em periodic}. In this periodic  case, Li's proof begins  by compactifying $(M,J)$ as an orbifold
complex surface, and then proceeds to deduce properties of the isometry group of $(M,h)$ from properties of   the complex automorphism group of the compactification.

 \pagebreak 
%

 \bigskip
 
 \noindent {\bf Acknowledgements.} All three authors are grateful for the hospitality of the Institut Mittag-Leffler during its workshop, {\sf Einstein Spaces
 and Special Geometry},  where our collaboration on this project  began. 
  The third author would also like to thank Lars Andersson, Vestislav Apostolov,  Mingyang Li, Song Sun, 
 and Edward Witten for 
 useful discussions of the problem. 

\bigskip

\noindent 
Olivier Biquard\\
{\sc Sorbonne Universit\'e \& Universit\'e  Paris Cit\'e, \\CNRS, IMJ-PRG, F-75005 Paris}\\
{\em Email address:} \href{mailto:olivier.biquard@sorbonne-universite.fr}{olivier.biquard@sorbonne-universite.fr}

\bigskip 

\noindent 
Paul Gauduchon\\
{\sc \'Ecole Polytechniqe, CNRS, CMLS, F-91120 Palaiseau}\\
{\em Email address:} \href{mailto:paul.gauduchon@polytechnique.fr}{paul.gauduchon@polytechnique.fr}

\bigskip 

\noindent 
Claude LeBrun\\ 
{\sc Stony Brook University (SUNY), Stony Brook, NY 11794-3651}\\
{\em Email address:}  \href{mailto:claude@math.stonybrook.edu}{claude@math.stonybrook.edu}  

\end{document}